\documentclass[a4paper,12pt]{amsart}

\usepackage{amsmath,amssymb}
\usepackage{ifthen}
\usepackage{graphicx}
\usepackage[T1]{fontenc}
\usepackage[utf8]{inputenc}
\usepackage[usenames,dvipsnames]{color}
\usepackage[english]{babel}
\usepackage{fancyhdr}
\usepackage{fancybox}
\usepackage{tikz}
\usepackage{cite}
\usepackage[backref=page, colorlinks=true, linkcolor=blue, citecolor=red, urlcolor=blue]{hyperref}

\renewcommand*{\backref}[1]{}
\renewcommand*{\backrefalt}[4]{%
	\ifcase #1 %
	(Not cited.)%
	\or
	[#2]%
	\else
	[#2]%
	\fi
}

\nonstopmode 
\numberwithin{equation}{section}
\theoremstyle{plain}
\newtheorem{prop}{Proposition}

\newtheorem{conj}{Conjecture}

\theoremstyle{definition}
\newtheorem{defi}{Definition}[section]
\newtheorem{exm}{Example}[section]
\newtheorem{cor}{Corollary}[section]
\newtheorem{thm}{Theorem}[section]

\newtheorem{lem}{Lemma}[section]
\newtheorem{prob}{Problem}
\newtheorem{rem}{Remark}[section]

\theoremstyle{plain}

\newtheorem*{thmA}{Theorem A}

\newtheorem*{lemA}{Lemma A}

\newcounter{minutes}
\divide\time by 60
\newcounter{hours}
\multiply\time by 60
\addtocounter{minutes}{-\time}

\newcounter {own}
\def\theown {\thesection.\arabic{own}}

\newenvironment{pf}[1][]{%
	\vskip 3mm
	\noindent
	\ifthenelse{\equal{#1}{}}%
	{{\slshape Proof. }}%
	{{\slshape #1.} }%
}%
{\qed\bigskip}

\newcounter{alphabet}

\def\be{\begin{equation}}
	\def\ee{\end{equation}}

\newcommand{\bee}{\begin{enumerate}}
	\newcommand{\eee}{\end{enumerate}}

\newcommand{\blem}{\begin{lem}}
	\newcommand{\elem}{\end{lem}}
\newcommand{\bthm}{\begin{thm}}
	\newcommand{\ethm}{\end{thm}}
\newcommand{\bcor}{\begin{cor}}
	\newcommand{\ecor}{\end{cor}}
\newcommand{\beg}{\begin{examp}}
	\newcommand{\eeg}{\end{examp}}
\newcommand{\begs}{\begin{examples}}
	\newcommand{\eegs}{\end{examples}}
\newcommand{\bdefn}{\begin{defn}}
	\newcommand{\edefn}{\edefn}
	\newcommand{\bprob}{\begin{prob}}
		\newcommand{\eprob}{\end{prob}}
	\newcommand{\bei}{\begin{itemize}}
		\newcommand{\eei}{\end{itemize}}
	\newcommand{\bcon}{\begin{conj}}
		\newcommand{\econ}{\end{conj}}
	\newcommand{\bprop}{\begin{prop}}
		\newcommand{\eprop}{\eprop}
		\newcommand{\br}{\begin{rem}}
			\newcommand{\er}{\end{rem}}
		\newcommand{\bpf}{\begin{pf}}
			\newcommand{\epf}{\end{pf}}
		\newcommand{\ba}{\begin{array}}
			\newcommand{\ea}{\end{array}}
		\newcommand{\beq}{\begin{eqnarray}}
			\newcommand{\beqq}{\begin{eqnarray*}}
				\newcommand{\eeq}{\end{eqnarray}}
			\newcommand{\eeqq}{\end{eqnarray*}}

\begin{document}

\title{Schwarzian norm estimates for some classes of analytic and harmonic mappings}

\author{Molla Basir Ahamed}
\address{Molla Basir Ahamed, Department of Mathematics, Jadavpur University, Kolkata-700032, West Bengal, India.}
\email{mbahamed.math@jadavpuruniversity.in}

\author{Rajesh Hossain}
\address{Rajesh Hossain, Department of Mathematics, Jadavpur University, Kolkata-700032, West Bengal, India.}
\email{rajesh1998hossain@gmail.com}

\subjclass[{AMS} Subject Classification:]{Primary: 30C45; 30C55; 30C80}
\keywords{Univalent functions, Convex functions, Schwarzian norm, Growth and Distortion theorems,Radius Problem, Bloch constant, Sharp bounds}
\def\thefootnote{}
\footnotetext{ {\tiny File:~\jobname.tex,
printed: \number\year-\number\month-\number\day,
          \thehours.\ifnum\theminutes<10{0}\fi\theminutes }
} \makeatletter\def\thefootnote{\@arabic\c@footnote}\makeatother
\begin{abstract} 
	Let $\mathcal{A}$ be the normalized class of analytic functions $f$ in the unit disc $\mathbb{D} := \{z \in \mathbb{C} : \vert{}z\vert{} < 1\}$. For $\beta > 1$, let $\mathcal{N}(\beta)$ denote the subclass of $\mathcal{A}$ satisfying $\text{Re}\{1 + z f''(z)/f'(z)\} < \beta$ for $z \in \mathbb{D}$. The main purpose of this paper is to establish sharp bounds for the pre-Schwarzian norm $\Vert{}P_f\Vert{}$ and Schwarzian norm $\Vert{}S_f\Vert{}$ for functions $f \in \mathcal{N}(\beta)$, parametrized by $f''(0)$, with special emphasis on the case $f''(0) = 0$. In addition, sharp growth, distortion, and radius results (convexity and concavity) for $\mathcal{N}(\beta)$ are obtained. As an application, we determine the sharp pre-Schwarzian norm estimate for harmonic mappings $f = h + \bar{g}$ whose analytic part $h$ belongs to $\mathcal{N}(\beta)$.
\end{abstract}
\maketitle
\pagestyle{myheadings}
\markboth{M. B. Ahamed and R. Hossain}{Pre-Schwarzian and Schwarzian norm estimates}
\tableofcontents
\section{\bf Introduction}
Let \( \mathbb{D} = \{z \in \mathbb{C} : |z| < 1\} \) be the unit disk, and define \( \mathcal{H} \) as the class of analytic functions on \( \mathbb{D} \). The subclass \( \mathcal{LU} \) consists of locally univalent functions, \textit{i.e.,} functions \( f \in \mathcal{H} \) with \( f^{\prime}(z) \neq 0 \) for all \( z \in \mathbb{D} \). Let $\mathcal{A}$ denote the subclass of $\mathcal{H}$, a class of analytic functions on the unit disk $\mathbb{D}$,  consisting of functions $f$ with normalized conditions $f(0)=f^{\prime}(z)-1=0$. Thus, any function $f$ in $\mathcal{A}$ has the Taylor series expansion of the form 
\begin{align}\label{Eq-1.1}
	f(z)=z+\sum_{n=2}^{\infty}a_nz^n\;\; \mbox{for all}\;\; z\in\mathbb{D}.
\end{align}
Let $\mathcal{S}$ be the subclass of $\mathcal{A}$ consisting of univalent (that is, one-to-one) functions. A function $f\in \mathcal{A}$ is called starlike (with respect to the origin) if $f(\mathbb{D})$ is starlike with respect to the origin and convex if $f(\mathbb{D})$ is convex. For a locally univalent analytic function $f(z)$ defined in a simply connected domain ${\Omega}$, the pre-Schwarzian derivative $P_f$ and the Schwarzian derivative $S_f$ are defined by
\begin{align*}
	P_f=\frac{f^{\prime\prime}(z)}{f^{\prime}(z)}\;\mbox{and}\;S_f=(P_f)^{\prime}(z)-\frac{1}{2}(P_f)^2(z)=\frac{f^{\prime\prime\prime}(z)}{f^{\prime\prime}(z)}-\frac{3}{2}\left(\frac{f^{\prime\prime}(z)}{f^{\prime}(z)}\right)^2,
\end{align*}
respectively. The pre-Schwarzian and Schwarzian norms are defined by
\begin{align*}
	||P_f||_{\Omega}= \sup_{z\in{\Omega}}|P_f|\eta^{-1}_{\Omega}\;\mbox{and}\;||S_f||_{\Omega}= \sup_{z\in{\Omega}}|S_f|\eta^{-2}_{\Omega}
\end{align*}
respectively, where $\eta_{\Omega}$ is the Poincare density. In particular ${\Omega}=\mathbb{D}$, then $||S_f||_{\Omega}$ and $||P_f||_{\Omega}$ are denoted by $||Sh||$ and $||Ph||$,  respectively.\vspace{2mm}

The pre-Schwarzian and Schwarzian derivatives are key tools in geometric function theory, particularly for characterizing Teichmüller space through embedding models. They also play a crucial role in studying the inner radius of univalency for planar domains and quasiconformal extensions \cite{Lehto-1987,Lehto-JAM-1979}. Their study dates back to Kummer (1836), who introduced the Schwarzian derivative in the context of hypergeometric PDEs. Since then, extensive research has explored their connections to univalent functions, leading to several sufficient conditions for univalency.\vspace{2mm}

It is well-known that the pre-Schwarzian norm $||P_f||\leq 6$ holds for the univalent analytic function $f$ is defined in $\mathbb{D}$. In $1972$, Becker \cite{Becker-JRAM-1983} used the pre-Schwarzian derivative to obtain the sufficient condition that the function in $\mathbb{D}$ is univalent, in other words, if $||P_f||\leq1$, then the function $f$ is univalent in $\mathbb{D}$. In $1976$, Yamashita \cite{Yamashita-MM-1976} proved that $||P_f||$ is finite if, and only if, $f$ is uniformly locally univalent in $\mathbb{D}$, \emph{i.e.}, there exists a constant $\rho$ such that $f$ is univalent on the hyperbolic disk $|(z-a)/(1-\bar{a}z)|<\tanh\rho$ of radius $\rho$ for every $a\in\mathbb{D}$. Sugawa \cite{Sugawa-AUMCDS-1996} studied the strongly starlike functions of order $\alpha\; (0<\alpha\leq1)$. Yamashita\cite{Yamashita-HMJ-1999} generalized sugawa's results by a general class named Gelfer-starlike of exponential order $\alpha (\alpha>0)$ and the Gelfer-close-to-convex of exponential order $(\alpha,\beta)$ ($\alpha>0$, $\beta>0$). These Gelfer classes also contain the classical starlike, convex, close-to-convex all of order $\alpha$ ($0\leq\alpha<1$), which are denote by $\mathcal{S^*(\alpha)}$, $\mathcal{C(\alpha)}$, $\mathcal{K(\alpha)}$ respectively, and so on.\vspace{2mm}

The pioneering work on the bound \( ||S_f|| \leq 6 \) for a univalent function \( f \in \mathcal{A} \) was first introduced by Kraus \cite{Kraus-1932} and later revisited by Nehari \cite{Nehari-BAMS-1949}. In the same paper, Nehari also proved that if $||S_f||\leq2$, then the function $f$ is univalent in $\mathbb{D}$.  For recent development of the pre-Schwarzian norm estimates of other function forms such as convolution operator and integral operator, we refer to the articles \cite{Choi-Kim-Ponnusamy-Sugawa-JMAP-2005,Kim-Sugawa-PEMS-2006,Parvatham-Ponnusamy-Sahoo-HMJ-2008,Ponnusamy-Sahoo-JMAA-2008,Ponnusamy-Sugawa-JKMS-2008,Ponnusamy-Sahoo-M-2008,Kanas-AMC-2009} and references therein.\vspace{2mm}

The Schwarzian norm plays a significant role in the theory of quasiconformal mappings and Teichm\"uller space (see \cite{Lehto-1987}). A mapping \( f : \hat{\mathbb{C}} \to \hat{\mathbb{C}} \) of the Riemann sphere \( \hat{\mathbb{C}} := \mathbb{C} \cup \{\infty\} \) is said to be a \( k \)-quasiconformal (\( 0 \leq k < 1 \)) mapping if it is a sense-preserving homeomorphism of \( \hat{\mathbb{C}} \) and has locally integrable partial derivatives on \( \mathbb{C} \setminus \{f^{-1}(\infty)\} \), satisfying \( |f_{\bar{z}}| \leq k |f_z| \) almost everywhere.  On the other hand, Teichmüller space \( \mathcal{T} \) can be identified with the set of Schwarzian derivatives of analytic and univalent functions on \( \mathbb{D} \) that have quasiconformal extensions to \( \hat{\mathbb{C}} \). It is known that \( \mathcal{T} \) is a bounded domain in the Banach space of analytic functions on \( \mathbb{D} \) with a finite hyperbolic sup-norm (see \cite{Lehto-1987}).  \vspace{2mm}

The Schwarzian derivative and quasiconformal mappings are connected through key results presented below.
\begin{thmA}\emph{\cite{Ahlfrors-Weill-PAMS-2012,Kühnau-MN-1971}}
	If $f$ extends to a $k$-quasiconformal $(0\leq k<1)$ mapping of the Riemann share $\hat{\mathbb{C}}$, then $||S_f||\leq 6k$. Conversely, if $||S_f||\leq 2k$, then $f$ extends to a $k$-quasiconformal mapping of the Riemann sphere $\hat{\mathbb{C}}$.
\end{thmA}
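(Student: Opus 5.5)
Theorem A has two halves, and I would prove them by two separate classical routes: the necessity bound $\|S_f\|\le 6k$ through the theory of quasiconformal deformations, and the sufficiency $\|S_f\|\le 2k\Rightarrow$ $k$-qc extension through the explicit Ahlfors--Weill reflection. Throughout I use that the Schwarzian is Möbius-invariant, $S_{T\circ f}=S_f$ and $S_{f\circ\sigma}=(S_f\circ\sigma)(\sigma')^2$ for $\sigma\in\mathrm{Aut}(\mathbb{D})$. Consequently $\|S_f\|$ is invariant under pre-composition with disc automorphisms and post-composition with Möbius maps. This lets me evaluate every pointwise estimate at $z=0$.

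For the first half (Kühnau), suppose $f$ extends to a $k$-quasiconformal map $F$ of $\hat{\mathbb{C}}$, with complex dilatation $\mu$ supported in $\hat{\mathbb{C}}\setminus\mathbb{D}$ and $\|\mu\|_\infty\le k$. I embed $F$ in the holomorphic family $F_t$, $|t|<1/k$, obtained by solving the Beltrami equation with dilatation $t\mu/k$, normalized at three points. By the measurable Riemann mapping theorem with holomorphic dependence on parameters, $t\mapsto F_t|_{\mathbb{D}}$ is holomorphic and every $F_t$ is univalent on $\mathbb{D}$. For fixed $z$, the function $\varphi(t)=S_{F_t}(z)(1-|z|^2)^2$ is holomorphic on the disc $|t|<1/k$. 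It vanishes at $t=0$, since $F_0$ is Möbius, and by the Kraus--Nehari bound quoted earlier it satisfies $|\varphi|\le 6$. Applying the Schwarz lemma on the disc of radius $1/k$ gives $|\varphi(1)|\le 6k$. Taking the supremum over $z$ yields $\|S_f\|\le 6k$.

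For the converse (Ahlfors--Weill), assume $\|S_f\|\le 2k<2$. By Nehari's theorem, $f$ is then univalent. I write $f=w_1/w_2$, where $w_1,w_2$ are normalized solutions of $w''+\tfrac12 S_f\,w=0$ with Wronskian $1$. I define the extension for $|z|>1$ by
\begin{align*}
F(z)=f(\zeta)+\frac{(1-|\zeta|^2)f'(\zeta)}{\bar\zeta-\tfrac12(1-|\zeta|^2)P_f(\zeta)},\qquad \zeta=1/\bar z.
\end{align*}
A direct computation with $\partial,\bar\partial$ in the $\zeta$-variable gives a dilatation of modulus $\tfrac12|S_f(\zeta)|(1-|\zeta|^2)^2\le k$, using $S_f=(P_f)'-\tfrac12P_f^2$ to simplify. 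This shows $F$ is locally $k$-quasiconformal off the circle.

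The remaining steps are to show that $F$ is continuous across $\partial\mathbb{D}$ (the correction term tends to $0$ as $|\zeta|\to1$), sense-preserving, and globally injective. For the injectivity I would deform along $f_t$ with $S_{f_t}=tS_f$, $t\in[0,1]$: each extension $F_t$ is a local homeomorphism of the sphere, hence a covering, hence a homeomorphism, and the construction depends continuously on $t$. Removability of the circle for quasiconformality then finishes the proof.

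I expect the main obstacle to be this global injectivity together with controlling the denominator $\bar\zeta-\tfrac12(1-|\zeta|^2)P_f$, which must not vanish. That the denominator stays nonzero should follow from $\|S_f\|<2$ via a Nehari-type argument, i.e. Sturm comparison for the ODE along radii.
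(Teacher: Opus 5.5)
The paper gives no proof of Theorem A; it is quoted from Ahlfors--Weill and K\"uhnau. So the comparison is with the classical arguments, and your outline follows them.

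\textbf{First half.} This is sound once the parametrization is made consistent. Use the dilatation $t\mu$ for $|t|<1/k$ and evaluate at $t=1$, or use $t\mu/k$ for $|t|<1$ and evaluate at $t=k$. As written, $t\mu/k$ on $|t|<1/k$ can have sup-norm $\ge 1$, and $F_1$ has dilatation $\mu/k$ rather than $\mu$. Your extension formula and the dilatation modulus $\tfrac12|S_f(\zeta)|(1-|\zeta|^2)^2$ are both correct.

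\textbf{The gap in the second half.} The claim you single out as the main obstacle is false. For $|\zeta|<1$, the denominator $\bar\zeta-\tfrac12(1-|\zeta|^2)P_f(\zeta)$ vanishes exactly when $F(1/\bar\zeta)=\infty$. This is forced whenever $f(\mathbb{D})$ is bounded, because a homeomorphism of $\hat{\mathbb{C}}$ extending $f$ must then send some point of $\hat{\mathbb{C}}\setminus\overline{\mathbb{D}}$ to $\infty$. Two examples:
\begin{itemize}
\item For $f(z)=z$ the denominator is $\bar\zeta$, which vanishes at $\zeta=0$.
\item For $f(z)=z/(1-az)$ with $0<|a|<1$, we have $S_f\equiv0$ and the extension is $f$ itself. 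The denominator vanishes at $\zeta=\bar a$, the reflection of the pole $1/a$.
\end{itemize}
So no Sturm-comparison argument can show the denominator is nonzero. Instead, treat $F$ as $\hat{\mathbb{C}}$-valued. Write $F=N/D$ with $N=w_1+uw_1'$, $D=w_2+uw_2'$ and $u=1/\bar\zeta-\zeta$. Then $w_1D-w_2N=uW\neq0$ for $0<|\zeta|<1$, where $W$ is the Wronskian. Hence $N$ and $D$ have no common zero, and the dilatation computation goes through in the chart at $\infty$.

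Two further steps also fail as stated.
\begin{itemize}
\item ``The correction term tends to $0$'' is false without boundary regularity of $f$. For the half-plane map $f(z)=z/(1-z)$ one has $f(r)\to\infty$, while the correction term equals $-(1+r)/(1-r)\to-\infty$.
\item Nothing in your outline gives local injectivity of $F$ at points of $|z|=1$, and your covering argument needs it.
\end{itemize}

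\textbf{The missing idea: approximation.} Run the construction for $f_r(z)=f(rz)$. This function is analytic on $\overline{\mathbb{D}}$ and satisfies $\|S_{f_r}\|\le\|S_f\|$, because $r(1-|z|^2)\le 1-r^2|z|^2$. For $f_r$ the extension is $C^1$ across the circle, with $\partial_zF=f_r'$ and $\partial_{\bar z}F=0$ there. It is therefore a sense-preserving local diffeomorphism of $\hat{\mathbb{C}}$, hence a covering of the sphere, hence a $k$-quasiconformal homeomorphism. Then let $r\to1$, using compactness of suitably normalized $k$-quasiconformal maps. With this in place, your deformation $S_{f_t}=tS_f$ and the appeal to removability of the circle are unnecessary: a local homeomorphism of $\hat{\mathbb{C}}$ onto itself is already a homeomorphism.
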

Regarding to the estimates of the Schwarzian norm for the subclasses of univalent functions \emph{i.e.,} of functions $f$ that satisfy:
\begin{align*}
	\bigg|\arg\left(\frac{zf^{\prime}(z)}{f(z)}\right)\bigg|<\alpha\frac{\pi}{2},\; z\in\mathbb{D},
\end{align*}
where $0\leq \alpha<1$. Fait \textit{et al.} \cite{Fait-Krzy-Zygmunt-CMH-1976} studied the strong starlike function. In $1996$, Suita \cite{Suita-JHUED-1996} studied the class $\mathcal{C(\alpha)}$, $0\leq \alpha<1$ and using the integral representation of functions in $\mathcal{C}(\alpha)$ proved that the Schwarzian norm satisfies the sharp inequality 
\begin{align*}
	||S_f||\leq\begin{cases}
		2,\;\;\;\;\;\;\;\;\;\;\;\;\;\;\;\;\; \mbox{if}\; 0\leq \alpha\leq 1/2,\\
		8\alpha(1-\alpha),\;\;\;\;\mbox{if}\; 1/2<\alpha<1.
	\end{cases}
\end{align*}
For a constant $\beta\in (-\pi/2, \pi/2)$, a function $f\in\mathcal{A}$ is called $\beta$-spiral like if $f$ is univalent on $\mathbb{D}$ and for any $z\in\mathbb{D}$, the $\beta$-logarithmic spiral $\{f(z)\exp\left(-e^{i\beta}t\right);\; t\geq 0\}$ is contained in $f(\mathbb{D})$. It is equivalent to the condition that ${\rm Re} \left(e^{-i\beta}zf^{\prime}(z)/f(z)\right)>0$ in $\mathbb{D}$ and we denote by $\mathcal{SP}(\beta)$, the set of all $\beta$-spiral like functions. Okuyama \cite{Okuyama-CVTA-2000} give the best possible estimate of the norm of pre-Schwarzian derivatives for the class $\mathcal{SP}(\beta)$.\vspace{2mm}

A function $f\in\mathcal{A}$ is said to be uniformly convex function if every circular arc (positively oriented) of the form $\{z\in\mathbb{D} : |z-\eta|=r\}$, $\eta\in\mathbb{D}$, $0<r<|\eta|+1$ is mapped by $f$ univalently onto a convex arc. The class of all uniformly convex functions is denoted by $\mathcal{UCV}$. In particular, $\mathcal{UCV}\subset \mathcal{K}$. It is well-known that (see \cite{Goodman-APM-1991}) a function $f\in\mathcal{A}$ is uniformly convex if, and only if, 
\begin{align*}
	{\rm Re}\left(1+\frac{z f^{\prime\prime}(z)}{f^{\prime}(z)}\right)>\bigg|\frac{z f^{\prime\prime}(z)}{f^{\prime}(z)}\bigg|^2\; \mbox{for}\; z\in\mathbb{D}.
\end{align*}
In \cite{Kanas-Sugawa-APM-2011}, Kanas and Sugawa established that the Schwarzian norm satisfies \( ||S_f|| \leq 8/\pi^2 \) for all \( f \in \mathcal{UCV} \), with the bound being sharp.  Recently, Schwarzian norm estimates for other subclasses of univalent functions have been gradually studied by many people, such as concave function class \cite{Bhowmik-Wriths-CM-2012}. Therefore, by using the pre-Schwarzian  and Schwarzian norms to study the univalence and quasiconformal extension problems of analytic function arouse a new wave of research interest.\vspace{2mm}

Let $\mathcal{S}^*(\alpha)$ and $\mathcal{C}(\alpha)$ denote respectively, the classes of starlike and convex functions of order $\alpha$ for $0\leq \alpha<1$ in $\mathcal{S}$. It is well-known that a function $f\in\mathcal{A}$ belongs to $\mathcal{S}^*(\alpha)$ if, and only if, ${\rm Re}(zf^{\prime}(z)/f(z))>\alpha$ for $z\in\mathbb{D}$, and $f\in\mathcal{C}(\alpha)$ if, and only if, ${\rm Re}(1+zf^{\prime\prime}(z)/f^{\prime}(z))>\alpha$. Similarly, a function $f\in\mathcal{A}$ belongs to $\mathcal{K}$, the class of close-to-convex functions, if and only if, there exists $g\in\mathcal{S}^*$ such that ${\rm Re}[e^{i\tau}(zf^{\prime}(z))/g(z)]>0$ for $z\in\mathbb{D}$ and $\tau\in (-\pi/2, \pi/2)$. Thus, it is easy to see that  $\mathcal{C}\subset\mathcal{S}^*\subset\mathcal{K}\subset\mathcal{S}$. In particular, when $\tau=0$, then the resulting subclass of the close-to-convex functions is denoted by $\mathcal{K}_0$.\vspace{2mm}

In this article, we examine two distinct function classes for fixed $\beta>1$, $\mathcal{M}(\beta)$ and $\mathcal{N}(\beta)$ \cite{Ali-Allu-JAMS-2016}, defined by
\begin{align*}
	\mathcal{M}(\beta)&=\bigg\{f\in\mathcal{A}:\;{\rm Re}\;\bigg\{\frac{zf^{\prime}(z)}{f(z)}\bigg\}<\beta\;\;\mbox{for}\;z\in\mathbb{D}\bigg\},
\end{align*}
and
\begin{align*}
	\mathcal{N}(\beta)=\bigg\{f\in\mathcal{A}:\;{\rm Re}\;\bigg\{1+\frac{zf^{\prime\prime}(z)}{f^{\prime}(z)}\bigg\}<\beta\;\;\mbox{for}\;z\in\mathbb{D}\bigg\}.
\end{align*}
respectively. It is easy to see that $f\in\mathcal{N}(\beta)$ if, and only if, $zf^{\prime}\in\mathcal{M}(\beta)$. In $1941$, Ozaki \cite{Ozaki-SRTBD-1941} introduced the class $\mathcal{N}(3/2)$ and prove the functions in $\mathcal{N}(3/2)$ are univalent in $\mathbb{D}$. Moreover, functions in the class $\mathcal{N}(3/2)$ were proved to be starlike in the unit disk $\mathbb{D}$ \cite{                Ponnuswamy-Rajasekaran-SJM-1995,Jovanovi´c-Obradovi´c-F-1995}. Thus the class $\mathcal{N}(\beta)$ is included in the class $\mathcal{S^*}$ for $1<\beta\leq3/2$. Also, we note that functions in the class $\mathcal{N}(\beta)$ need not be univalent in the unit disk $\mathbb{D}$ if $\beta>3/2$. For $1<\beta\leq4/3$, the class $\mathcal{M}(\beta)$ was introduced by \cite{Ali-Allu-JAMS-2016}. Later, full classes were investigated by Owa and Nishiwaki \cite{Nishiwaki-Owa-IJMMS-2002,Owa-Nishiwaki-IJMMS-2002} and also by Owa and Srivastava \cite{Owa-Srivastava-JIPAM-2002}. Recently, Obradovic \cite{Obradovi´c-Ponnusamy-Wirths-SMJ-2013} studies the class $\mathcal{N}(\beta)$ for $1<\beta\leq3/2$.\vspace*{2mm}

Before we state our main result, let us recall another important and useful tool known as the differential subordination technique. Many problems in geometric function theory can be solved in a simple and sharp manner with the help of differential subordination.  A function $f\in\mathcal{H}$ is said to be subordinate to another function $g\in\mathcal{H}$ if there exists an analytic function $\omega : \mathbb{D}\to\mathbb{D}$ with $\omega(0)=0$ such that $f(z)=g(\omega(z))$ and it is denoted by $f\prec g$. Moreover, when $g$ is univalent, then $f\prec g$ if, and only if, $f(0)=g(0)$ and $f(\mathbb{D}\subset g(\mathbb{D})$.\vspace{2mm} 

In terms of subordination, the class $\mathcal{M}(\beta)$ and $\mathcal{N}(\beta)$ can be defined as:\
\begin{align}\label{Eq-1.2}
	f\in\mathcal{M}(\beta)\; \mbox{if, and only if,}\;{\rm Re  }\left(\frac{zf^{\prime}(z)}{f(z)}\right)\prec\frac{1+(1-2\beta)z}{1-z},
\end{align}
and
\begin{align}\label{Eq-1.3}
	f\in\mathcal{N}(\beta)\; \mbox{if, and only if,}\;{\rm Re  }\left(\frac{zf^{\prime\prime}(z)}{f^{\prime}(z)}\right)\prec-\frac{2(\beta-1)z}{1-z}.
\end{align} In \cite{Chuaqui-Duren-Osgood-AASFM-2011}, Chuaqui \emph{et. al.} proved a result by applying the Schwarz-Pick lemma and the fact that the expression $1+z(f^{\prime\prime}/f^{\prime})(z)$ is subordinate to the half-plan mapping $\ell(z)=(1+z)/(1-z)$, which is 
\begin{align}\label{Eq-1.4}
	1+\frac{zf^{\prime\prime}(z)}{f^{\prime}(z)}=\ell(w(z))=\frac{1+w(z)}{1-w(z)}
\end{align}
for some function $w : \mathbb{D}\to\mathbb{D}$ holomorphic and such that $w(0)=0$. \vspace{2mm}

The expression which is defined in \eqref{Eq-2.2} allowed us to obtain other characterizations for the convex functions
\begin{align}\label{Eq-22.3}
	f\in\mathcal{C}\; \mbox{if, and only if,}\; {\rm Re}\left(1+\frac{zf^{\prime\prime}(z)}{f^{\prime}(z)} \right)\geq \frac{1}{4}\left( 1-|z|^2\right)\bigg| \frac{f^{\prime\prime}(z)}{f^{\prime}(z)}  \bigg|^2,
\end{align}
and 
\begin{align}\label{Eq-22.4}
	f\in\mathcal{C}\; \mbox{if, and only if,}\; \bigg|\left(1-|z|^2\right)\frac{f^{\prime\prime}(z)}{f^{\prime}(z)} -2\bar{z} \bigg|\leq 2,
\end{align}
for all $z\in\mathbb{D}$.\vspace{2mm}

The geometric properties of analytic functions such as growth, distortion, and the behavior of the Schwarzian and pre-Schwarzian derivatives are central to geometric function theory. Motivated by the works of Hernández and Martín (see \cite{Carrasco-Hernández-AMP-2011}), Wang \emph{et. al.} (see \cite{Wang-Li-Fan-MM-2024}), and more recently Ahamed \emph{et al.} (see \cite{Ahamed-Allu-Hossain-MM-2025,Ahamed-Hossain-MS-2026,Ahamed-Hossain-Wang-LJM-2026}), who studied these aspects for various function classes, we explore similar properties for new subclasses. This paper focuses on sharp norm estimates and bounds, culminating in a final theorem that offers a significant result with potential applications in future research.\vspace{2mm}

The paper is organized as follows. Section \ref{Sec-2} presents key properties of the class $\mathcal{N}(\beta)$, including sharp bounds for the pre-Schwarzian and Schwarzian derivative norms, as well as growth and distortion theorems. In Section \ref{Sec-3}, we evaluate the sharp pre-Schwarzian norm for harmonic mappings with a fixed analytic part in $\mathcal{N}(\beta)$. Section \ref{Sec-4} is devoted to establishing the Bloch constant for functions in this class. Finally, Section \ref{Sec-5} determines the sharp radii of convexity and concavity for $\mathcal{N}(\beta)$.
\section{\bf{Pre-Schwarzian and Schwarzian norm Estimates for the class $\mathcal{N}(\beta)$}}\label{Sec-2}
In this section, we first establish an equivalent characterization for the class $\mathcal{N}(\beta)$ which leads to Theorem \ref{Th-2.1}. We then present the distortion and growth theorem (Theorem \ref{Th-2.2}), followed by the derivation of the pre-Schwarzian and Schwarzian norms for functions in this class, expressed in terms of \( f''(0) \).
\begin{thm}\label{Th-2.1} For $\beta>1$, the following are equivalent:
	\begin{enumerate}
		\item[(i)] $f\in\mathcal{N}(\beta)$.\vspace{2mm}
		
		\item[(ii)] $\displaystyle{\rm Re  }\left(1+\frac{zf^{\prime\prime}(z)}{f^{\prime}(z)}\right)\leq\beta-\frac{1}{4}\left(\frac{1-|z|^2}{(\beta-1)}\right)\bigg|\frac{zf^{\prime\prime}(z)}{f^{\prime}(z)}\bigg|^2$\;\;\;\mbox{for}\;\;$\beta>1$.\vspace{2mm}
		
		\item[(iii)] $\displaystyle\bigg|	(1-|z|^2)\left(\frac{f^{\prime\prime}(z)}{f^{\prime}(z)}\right)+2(\beta-1)\bar{z}\bigg|\leq2(\beta-1)
		$.
	\end{enumerate}The inequalities \emph{(ii)} and \emph{(iii)} both are sharp for the function
	\begin{align}\label{Eq-22.11}
	f^{\prime}_{\lambda}(z)=(1-z)^{2(\beta-1)}\; \mbox{for}\; z\in\mathbb{D}\;\mbox{with}\;\beta\in(1,3/2].
	\end{align}
\end{thm}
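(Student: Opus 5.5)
The plan is to reduce everything to a Schwarz-lemma statement about one auxiliary function. Write $p(z)=1+zf''(z)/f'(z)$, $P=f''/f'$, and set $c=2(\beta-1)>0$. By \eqref{Eq-1.3}, $f\in\mathcal{N}(\beta)$ if and only if $q=(\beta-p)/(\beta-1)$, which satisfies $q(0)=1$, has positive real part. Equivalently, $q=(1+\omega)/(1-\omega)$ for a Schwarz function $\omega$. This gives
\begin{align*}
zP(z)=-\frac{c\,\omega(z)}{1-\omega(z)}, \qquad \omega(z)=\frac{zP(z)}{zP(z)-c}.
\end{align*}
By Schwarz's lemma, $\omega(z)=z\varphi(z)$ with $\varphi$ analytic and $|\varphi|\le 1$, where $\varphi=P/(zP-c)$. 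So (i) is equivalent to $|\varphi(z)|\le 1$ on $\mathbb{D}$. For the converse direction, $|\varphi|\le1$ gives $|\omega(z)|\le|z|<1$ and $\omega(0)=0$, hence $\operatorname{Re} q>0$, i.e.\ (i).

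Next I expand $|\varphi|\le 1$, that is $|P|^2\le|zP-c|^2$. This yields the key inequality
\begin{align*}
(1-|z|^2)|P(z)|^2+2c\,\operatorname{Re}(zP(z))\le c^2 .
\end{align*}
Multiplying by $1-|z|^2$ and completing the square gives
\begin{align*}
\bigl|(1-|z|^2)P+c\bar z\bigr|^2=(1-|z|^2)^2|P|^2+2c(1-|z|^2)\operatorname{Re}(zP)+c^2|z|^2\le c^2(1-|z|^2)+c^2|z|^2=c^2 .
\end{align*}
This is exactly (iii). Every step is reversible for $|z|<1$, so (iii) is equivalent to $|\varphi|\le1$ and hence to (i).

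For (ii), I solve the key inequality for $\operatorname{Re}(zP)$. This gives $\operatorname{Re}p\le \beta-(1-|z|^2)|P|^2/(4(\beta-1))$. Since $|zP|\le|P|$, the stated form (ii) follows, so (i) implies (ii). Conversely, (ii) gives $\operatorname{Re}p\le\beta$ on $\mathbb{D}$. Because $\operatorname{Re}p$ is harmonic with $\operatorname{Re}p(0)=1<\beta$, the maximum principle forbids equality at any interior point, so $\operatorname{Re}p<\beta$, which is (i).

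For sharpness I take $f_\lambda'=(1-z)^{c}$. Then $P=-c/(1-z)$, and a direct computation gives $\varphi\equiv1$. Hence the key inequality, and therefore (iii), holds with equality at every point. The version of (ii) with $|P|^2$ is also an identity, and the stated form with $|zP|^2$ becomes asymptotically sharp as $|z|\to1$ (e.g.\ along the real axis). The restriction $\beta\le 3/2$ only ensures univalence of the extremal function.

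The only delicate point is the converse (ii)$\Rightarrow$(i), because of the weaker factor $|zP|^2$. I handle it by the maximum-principle argument above rather than by reversing the algebra.
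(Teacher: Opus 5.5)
Your proof is correct and follows the same route as the paper. You write $zf''/f'=-2(\beta-1)\omega/(1-\omega)$ with $\omega=z\varphi$, expand $|\varphi|\le 1$ into the inequality \eqref{Eq-2.3}, and complete the square after multiplying by $1-|z|^2$ to obtain (iii). You are more careful than the paper at two points it glosses over. The paper actually derives (ii) with $|f''/f'|^2$ rather than the stated $|zf''/f'|^2$, and it never argues the converse implications; your use of $|zP|\le|P|$, the maximum-principle argument for (ii)$\Rightarrow$(i), and the reversibility argument for (iii)$\Rightarrow$(i) fill exactly these gaps.
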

As a consequence of Theorem \ref{Th-2.1}, we obtain two results for functions in the class $\mathcal{N}(3/2)$ involving inequalities.
\begin{cor}\label{Cor-2.1A}
	If $f\in \mathcal{N}(3/2)$, then we have the following inequality
	\begin{align}\label{Eq-2.2A}
		{\rm Re  }\left(1+\frac{zf^{\prime\prime}(z)}{f^{\prime}(z)}\right)\leq\frac{3}{2}-\left(\frac{1-|z|^2}{2}\right)\bigg|\frac{zf^{\prime\prime}(z)}{f^{\prime}(z)}\bigg|^2\;\mbox{for}\;\beta>1.
	\end{align}
	the inequality is sharp.
	\end{cor}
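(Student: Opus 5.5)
This corollary should follow by specializing Theorem~\ref{Th-2.1} to $\beta=3/2$. Since $f\in\mathcal{N}(3/2)$, the implication (i)$\Rightarrow$(ii) applies with $\beta-1=1/2$. The coefficient in (ii) then becomes
\begin{align*}
\frac{1}{4}\cdot\frac{1-|z|^2}{\beta-1}=\frac{1-|z|^2}{2},
\end{align*}
and the right-hand side of (ii) reads $\frac{3}{2}-\frac{1-|z|^2}{2}\big|zf''(z)/f'(z)\big|^2$. This is exactly \eqref{Eq-2.2A}. The appended phrase ``for $\beta>1$'' in the statement is vacuous once $\beta=3/2$ is fixed.

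To keep the argument self-contained, I would also recall why (i)$\Rightarrow$(ii) holds. By \eqref{Eq-1.3} and Schwarz's lemma, $zf''/f'=-2(\beta-1)w/(1-w)$ for a Schwarz function $w$. Set $p=1+zf''/f'$ and put $q=(\beta-p)/(\beta-1)$. Then $q=(1+w)/(1-w)$ has positive real part, and
\begin{align*}
w=\frac{q-1}{q+1}.
\end{align*}
The Schwarz--Pick-type bound $|w(z)|\le|z|$ should then be rearranged into
\begin{align*}
\operatorname{Re}q\ \ge\ \frac{1-|z|^2}{4}\,|q-1|^2 .
\end{align*}
One natural route is to apply the known convex characterization \eqref{Eq-22.3} to an auxiliary function $g$ with $1+zg''/g'=q$. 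Substituting $q-1=-zf''/(f'(\beta-1))$ and $\operatorname{Re}q=(\beta-\operatorname{Re}p)/(\beta-1)$ then gives (ii).

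For sharpness I would use the extremal function \eqref{Eq-22.11} with $\beta=3/2$, namely $f'(z)=1-z$. In that case $zf''/f'=-z/(1-z)$ and $1+zf''/f'=(1-2z)/(1-z)$. I would test points where equality can be checked, such as the boundary behaviour or a suitable ray. The expected mechanism is that equality in (ii) corresponds to $w$ being a rotation, here $w(z)=z$.

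The main obstacle is the precise sharpness claim. Because $|w(z)|\le|z|$ is not the full Schwarz--Pick equality except in a limiting sense, I would verify sharpness by showing that the ratio of the two sides tends to equality as $|z|\to1^-$ (or as $z\to0$) for $f'(z)=1-z$. This would establish that the constant $\frac{1-|z|^2}{2}$ cannot be enlarged. The remaining steps are direct substitution into Theorem~\ref{Th-2.1}.
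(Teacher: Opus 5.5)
Your derivation of the inequality is correct and essentially matches the paper. The corollary is (i)$\Rightarrow$(ii) of Theorem~\ref{Th-2.1} at $\beta=3/2$, and your route via $q=(1+w)/(1-w)$ and \eqref{Eq-22.3} just repackages the paper's step $|\phi|\le 1$. The gap is in the sharpness part, whose intended conclusion is false.

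Your own route proves more than you extracted. Since $g''/g'=(q-1)/z$, \eqref{Eq-22.3} gives $\operatorname{Re}q\ge\frac{1-|z|^2}{4|z|^2}|q-1|^2$. With $\operatorname{Re}q=(1-|w|^2)/|1-w|^2$ and $|q-1|^2=4|w|^2/|1-w|^2$, this is \emph{equivalent} to $|w(z)|\le|z|$. Translating back, every $f\in\mathcal{N}(3/2)$ satisfies
\[
\operatorname{Re}\Bigl(1+\frac{zf''(z)}{f'(z)}\Bigr)\le\frac32-\frac{1-|z|^2}{2}\Bigl|\frac{f''(z)}{f'(z)}\Bigr|^2,
\]
which is what the proof of Theorem~\ref{Th-2.1} actually yields. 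This is stronger than \eqref{Eq-2.2A} by the factor $|z|^{-2}$ on the subtracted term. So the coefficient $\frac{1-|z|^2}{2}$ in front of $|zf''/f'|^2$ can be enlarged to $\frac{1-|z|^2}{2|z|^2}$, and no limiting argument can show otherwise. Radial limits are also uninformative. For $f'(z)=1-z$ and $z=r\to1^-$, both sides tend to $-\infty$, so their ratio tends to $1$ while their difference tends to $2$. At $z=0$ the two sides are $1$ and $3/2$ for every $f$.

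You have also misplaced the loss. For $f'(z)=1-z$ one has $w(z)=z$, so Schwarz's lemma is attained exactly at every point; this is the paper's observation $\phi\equiv1$ in \eqref{Eq-22.44}. Hence equality holds identically in the display above: $\operatorname{Re}\frac{1-2z}{1-z}=\frac32-\frac{1-|z|^2}{2|1-z|^2}$ on $\mathbb{D}$. The defect comes only from replacing $|f''/f'|^2$ by $|zf''/f'|^2$. For this function the right side of \eqref{Eq-2.2A} exceeds the left by $\frac{(1-|z|^2)^2}{2|1-z|^2}>0$. In fact, for every $f\in\mathcal{N}(3/2)$ and every $z\in\mathbb{D}$, \eqref{Eq-2.2A} is strict: equality would force $f''(z)=0$, and then the two sides are $1$ and $3/2$.

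The paper's example has the same issue, since its computation $\phi\equiv1$ supports equality only in the $|f''/f'|^2$ form. The right fix is to state and prove the $|f''/f'|^2$ version, where $f'(z)=1-z$ gives equality at every point. For \eqref{Eq-2.2A} as literally written, you can claim validity but not sharpness. At most you can note that, for this function, the gap tends to $0$ as $z\to-1$ radially.
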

	\begin{cor}\label{Cor-2.2}
		 If $f\in \mathcal{N}(3/2)$, then we have the following inequality
		\begin{align}\label{Eq-2.3A}
			\bigg|	(1-|z|^2)\left(\frac{f^{\prime\prime}(z)}{f^{\prime}(z)}\right)+\bar{z}\bigg|\leq1
		\end{align}
		the inequality is sharp.
	\end{cor}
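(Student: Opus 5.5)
Corollary~\ref{Cor-2.2} is the special case $\beta=3/2$ of part (iii) of Theorem~\ref{Th-2.1}. Since $2(\beta-1)=1$ when $\beta=3/2$, inequality (iii) reads exactly
\begin{align*}
\bigg|(1-|z|^2)\frac{f''(z)}{f'(z)}+\bar z\bigg|\leq 1 .
\end{align*}
So the plan has two parts: specialize the theorem, then check sharpness.

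For completeness, I would also give the direct argument behind (iii) for general $\beta$, since it is short. Put $c=2(\beta-1)$. By the subordination \eqref{Eq-1.3}, for $f\in\mathcal{N}(\beta)$ there is a Schwarz function $w$ with
\begin{align*}
\frac{zf''(z)}{f'(z)}=-\frac{c\,w(z)}{1-w(z)} .
\end{align*}
By Schwarz's lemma we may write $w(z)=z\omega(z)$ with $\omega$ analytic and $|\omega|\le 1$. Then $f''/f'=-c\,\omega/(1-z\omega)$, and a direct computation gives
\begin{align*}
(1-|z|^2)\frac{f''(z)}{f'(z)}+c\bar z
=c\,\frac{\bar z(1-z\omega)-(1-|z|^2)\omega}{1-z\omega}
=c\,\frac{\bar z-\omega}{1-z\omega}.
\end{align*}
The modulus of the last quotient is at most $1$ whenever $|z|<1$ and $|\omega|\le1$. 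This follows from the standard identity $|1-z\omega|^2-|\bar z-\omega|^2=(1-|z|^2)(1-|\omega|^2)\ge 0$. Hence the left-hand side is bounded by $c$, and taking $\beta=3/2$ (so $c=1$) yields \eqref{Eq-2.3A}.

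For sharpness, I would use the extremal function \eqref{Eq-22.11} with $\beta=3/2$, namely $f'(z)=1-z$. This gives $f''/f'=-1/(1-z)$. At real $z=r\in[0,1)$ we get
\begin{align*}
(1-r^2)\left(-\frac{1}{1-r}\right)+r=-(1+r)+r=-1,
\end{align*}
so equality holds for every such $r$. Equivalently, this function corresponds to $\omega\equiv1$, for which $|\bar z-\omega|=|1-z\omega|$.

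There is no real obstacle here. The only points needing care are the algebraic simplification to the Möbius-type quotient and confirming that the extremal function lies in $\mathcal{N}(3/2)$. For the latter, note that $1+zf''/f'=1-z/(1-z)=(1-2z)/(1-z)$ has real part less than $3/2$ on $\mathbb{D}$, since $-z/(1-z)$ maps $\mathbb{D}$ onto the half-plane $\mathrm{Re}<1/2$.
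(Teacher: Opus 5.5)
Your proposal is correct and follows the paper's route: specialize Theorem~\ref{Th-2.1}(iii) to $\beta=3/2$, where $2(\beta-1)=1$, and check equality for $f'(z)=1-z$ at $z=r\in[0,1)$. Your identity $(1-|z|^2)f''/f'+c\bar z=c(\bar z-\omega)/(1-z\omega)$ is a compact form of the paper's derivation, which expands $|\phi|\le 1$ and completes the square (the same quotient reappears as $\Psi$ in the proof of Theorem~\ref{Th-2.4}), and your half-plane argument that $1-z/(1-z)$ has real part below $3/2$ is the right membership check, whereas the paper's remark that this real part is positive is beside the point and false near $z=1$.
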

	\begin{exm}
		For the sharpness of the inequalities \eqref{Eq-2.2A} and \eqref{Eq-2.3A}, we consider the function defined in \eqref{Eq-22.11} with $\beta=3/2$ as 
		\begin{align*}
			f^{\prime}_{3/2}(z)=1-z
		\end{align*}
		A simple computation using \eqref{Eq-22.11} shows that
		\begin{align*}
			1+\frac{zf_{3/2}^{\prime\prime}(z)}{f_{3/2}^{\prime}(z)}=1-\frac{z}{1-z}.
		\end{align*}
		Moreover, it is easy to see that
		\begin{align*}
			{\rm 	Re}\left(1+\frac{zf_{3/2}^{\prime\prime}(z)}{f_{3/2}^{\prime}(z)}\right)>0,
		\end{align*}
		hence, it is clear that $f_{3/2}\in\mathcal{N}(3/2)$.\vspace{2mm}
		
		To show the inequality \eqref{Eq-2.3A} of Corollary \ref{Cor-2.2} is sharp, we consider $z=r<1$ and establish that 
		\begin{align*}
			\bigg|	(1-|z|^2)\left(\frac{f_{3/2}^{\prime\prime}(z)}{f_{3/2}^{\prime}(z)}\right)+\bar{z}\bigg|=\bigg|	(1-r^2)\left(\frac{-1}{1-r}\right)+r\bigg|=|-1-r+r|=1.
		\end{align*}
		
		\noindent To show the inequality \eqref{Eq-2.2A} in Corollary \ref{Cor-2.1A} is sharp, we see from \eqref{Eq-2.2} (Proof of Theorem \ref{Th-2.1}) that 
		\begin{align}\label{Eq-22.44}
			\phi(z)=\frac{\frac{f_{3/2}^{\prime\prime}(z)}{f_{3/2}^{\prime}(z)}}{\frac{zf_{3/2}^{\prime\prime}(z)}{f_{3/2}^{\prime}(z)}-1}.
		\end{align}
		Thus, it is clear that $|\phi(z)|^2=1$ which further leads to 
		\begin{align*}
			{\rm Re  }\left(1+\frac{zf_{3/2}^{\prime\prime}(z)}{f_{3/2}^{\prime}(z)}\right)=\frac{3}{2}-\left(\frac{1-|z|^2}{2}\right)\bigg|\frac{zf_{3/2}^{\prime\prime}(z)}{f_{3/2}^{\prime}(z)}\bigg|^2.
		\end{align*}
	\end{exm}
	
\begin{proof}[\bf Proof of Theorem \ref{Th-2.1}]
	First, we will prove that $(i)$ is equivalent to $(ii)$. For $\beta>1$, let  $f\in\mathcal{N}(\beta)$ be of the form \eqref{Eq-1.1}. Then from \eqref{Eq-1.3}, we have
	\begin{align*}
		\frac{zf^{\prime\prime}(z)} {f^{\prime}(z)}\prec-\frac{2(\beta-1)z}{1-z},
	\end{align*}
	
	\noindent then there exists an analytic function $\omega: \mathbb{D}\rightarrow\mathbb{D}$ with $\omega(0)=0$ such that
	\begin{align}\label{Eq-2.1}
		\frac{zf^{\prime\prime}(z)} {f^{\prime}(z)}=-\frac{2(\beta-1)\omega(z)}{1-\omega(z)}.
	\end{align}
	 Let $\omega(z)=z\phi(z)$ for some analytic function $\phi$ that satisfy $\phi(\mathbb{D})\subseteq\mathbb{D}$. From \eqref{Eq-2.1}, it follows that
	\begin{align*}
		\frac{f^{\prime\prime}(z)}{f^{\prime}(z)}=-\frac{2(\beta-1)z\phi(z)}{z(1-z\phi(z))}
	\end{align*}
	which yields that
	\begin{align}\label{Eq-2.2}
		\phi(z)=\frac{\frac{f^{\prime\prime}(z)}{f^{\prime}(z)}}{\frac{zf^{\prime\prime}(z)}{f^{\prime}(z)}-2(\beta-1)}.
	\end{align}
	Since $|\phi(z)|^2\leq1$, an easy computation leads to
	\begin{align}\label{Eq-2.3}
		\bigg|\frac{f^{\prime\prime}(z)}{f^{\prime}(z)}\bigg|^2\leq4\left(\beta-1\right)^2-4\left(\beta-1\right)	{\rm Re}\left(\frac{zf^{\prime\prime}(z)}{f^{\prime}(z)}\right)+|z|^2\bigg|\frac{f^{\prime\prime}(z)}{f^{\prime}(z)}\bigg|^2.
	\end{align}
	By factorizing, we obtain
	\begin{align}
		\nonumber4(\beta-1)\left((\beta-1)-2{\rm Re}\left(\frac{zf^{\prime\prime}(z)}{f^{\prime}(z)}\right)\right)\ge(1-|z|^2)\bigg|\frac{f^{\prime\prime}(z)}{f^{\prime}(z)}\bigg|^2
	\end{align}
	which implies that
	\begin{align}
		{\rm Re}\left(\frac{zf^{\prime\prime}(z)}{f^{\prime}(z)}\right)\leq\beta-1-\left(\frac{1-|z|^2}{4(\beta-1)}\right)\bigg|\frac{f^{\prime\prime}(z)}{f^{\prime}(z)}\bigg|^2.
	\end{align}
	Consequently, we have
	\begin{align*}
		{\rm Re  }\left(1+\frac{zf^{\prime\prime}(z)}{f^{\prime}(z)}\right)\leq\beta-\left(\frac{1-|z|^2}{4(\beta-1)}\right)\bigg|\frac{f^{\prime\prime}(z)}{f^{\prime}(z)}\bigg|^2.
	\end{align*}
	\noindent Next, we will prove that $(ii)$ is equivalent to $(iii)$. Multiplying \eqref{Eq-2.3} by $(1-|z|^2)$ both side, we have
	\begin{align*}
		(1-|z|^2)^2\bigg|\frac{f^{\prime\prime}(z)}{f^{\prime}(z)}\bigg|^2\leq4\left(\beta-1\right)^2(1-|z|^2)-4\left(\beta-1\right)(1-|z|^2)	{\rm Re}\left(\frac{zf^{\prime\prime}(z)}{f^{\prime}(z)}\right)
	\end{align*}
	which implies that 
	\begin{align*}
		(1-|z|^2)^2\bigg|\frac{f^{\prime\prime}(z)}{f^{\prime}(z)}\bigg|^2+4\left(\beta-1\right)(1-|z|^2)	{\rm Re}\left(\frac{zf^{\prime\prime}(z)}{f^{\prime}(z)}\right)+4\left(\beta-1\right)^2|z|^2\leq4\left(\beta-1\right)^2.
	\end{align*}
	Thus, we have
	\begin{align*}
		\bigg|	(1-|z|^2)\left(\frac{f^{\prime\prime}(z)}{f^{\prime}(z)}\right)+2(\beta-1)\bar{z}\bigg|\leq2(\beta-1).
	\end{align*}
	This completes the proof.
\end{proof}
We obtain the following result which is distortion theorem and growth theorem for functions in the class $\mathcal{N}(\beta)$.
\begin{thm}\label{Th-2.2}
For $\beta>1$, let $f\in \mathcal{N}(\beta)$ be of the form \eqref{Eq-1.1}. Then we have
	\begin{align}
		\frac{1}{(1+|z|^2)^{\beta-1}}\leq|f^{\prime}(z)|\leq\frac{1}{(1-|z|^2)^{\beta-1}}	
	\end{align}
	and
	\begin{align}
		\int_{0}^{|z|}\frac{1}{(1+\xi^2)^{\beta-1}} d|\xi|\leq	|f(z)|\leq\int_{0}^{|z|}\frac{1}{(1-\xi^2)^{\beta-1}} d|\xi|.
	\end{align}
	All of these estimates are sharp. Equality holds for the function 
	\begin{align*}
		f_{\beta, \lambda}(z)=\int_{0}^{|z|}\frac{1}{(1-\lambda \zeta^2)^{\beta-1}}d|\zeta|
	\end{align*} 
	for some $\lambda\in\mathbb{C}$ with $|\lambda|=1$.
\end{thm}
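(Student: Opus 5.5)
The plan is to work from the subordination representation \eqref{Eq-2.1} used in the proof of Theorem \ref{Th-2.1}, namely
\begin{align*}
	\frac{zf^{\prime\prime}(z)}{f^{\prime}(z)}=-\frac{2(\beta-1)\omega(z)}{1-\omega(z)},\qquad \omega:\mathbb{D}\to\mathbb{D},\ \omega(0)=0,
\end{align*}
and to integrate it to obtain $\log f^{\prime}$. One point must be settled first. The bounds in the statement involve $|z|^2$ rather than $|z|$, and near $z=0$ they force $|f^{\prime}(z)|=1+O(|z|^2)$. Hence $f^{\prime\prime}(0)=0$, equivalently $\omega^{\prime}(0)=0$, so that $\omega(z)=z^2\psi(z)$ with $|\psi|\le 1$. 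This is the case singled out in the abstract, and I would carry out the proof under this normalization. Without it, the function $f^{\prime}(z)=(1-z)^{2(\beta-1)}$ from \eqref{Eq-22.11} has $|f^{\prime}(-r)|=(1+r)^{2(\beta-1)}$, which exceeds $(1-r^2)^{-(\beta-1)}$ for small $r$, so the stated upper bound would fail.

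\emph{Distortion.} With $\omega(z)=z^2\psi(z)$, the function $p(z)=zf^{\prime\prime}(z)/f^{\prime}(z)$ is subordinate to $q(z)=-2(\beta-1)z^2/(1-z^2)$. Since $\log f^{\prime}(z)=\int_0^z p(t)\,dt/t$, I would use the standard subordination lemma (Suffridge/Hallenbeck--Ruscheweyh type) to get
\begin{align*}
	\log f^{\prime}(z)\prec \int_0^z \frac{q(t)}{t}\,dt=(\beta-1)\log(1-z^2).
\end{align*}
This gives $f^{\prime}(z)\prec (1-z^2)^{\beta-1}$. The subordination $\log f^{\prime}\prec(\beta-1)\log(1-z^2)$ will be justified via the convexity of $q$ on $\mathbb{D}$. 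As a fallback that avoids this lemma, I would bound the real part of the integrand directly: on $|t|=\rho$ one has $|\omega(t)|\le\rho^2$, and by the disc geometry of $w\mapsto w/(1-w)$,
\begin{align*}
	-\frac{\rho^2}{1+\rho^2}\le {\rm Re}\frac{w}{1-w}\le\frac{\rho^2}{1-\rho^2}\qquad(|w|\le\rho^2).
\end{align*}
Integrating ${\rm Re}\,p(\rho e^{i\theta})/\rho$ along the radius then yields
\begin{align*}
	(1-r^2)^{\beta-1}\le|f^{\prime}(z)|\le(1+r^2)^{\beta-1},\qquad |z|=r.
\end{align*}
The inequalities of the theorem follow from $(1-r^2)(1+r^2)=1-r^4\le1$, since this gives $(1+r^2)^{\beta-1}\le(1-r^2)^{-(\beta-1)}$ and $(1-r^2)^{\beta-1}\ge(1+r^2)^{-(\beta-1)}$.

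\emph{Growth.} For the upper growth bound I would write $f(z)=\int_0^z f^{\prime}(\zeta)\,d\zeta$ along the segment $[0,z]$ and insert the upper distortion bound. For the lower bound I would use the usual argument: take $w_0$ on $f(\{|\zeta|=r\})$ nearest to $0$, and let $\gamma=f^{-1}([0,w_0])$, so that $|f(z)|\ge|w_0|=\int_\gamma|f^{\prime}(\zeta)||d\zeta|$. Inserting the lower distortion bound and using that $|\zeta|$ runs through $[0,r]$ along $\gamma$ gives the claim. This step needs univalence of $f$, which is available for $1<\beta\le 3/2$ (Ozaki). For larger $\beta$ I would either restrict to that range or replace $f^{-1}$ by a path-lifting argument for the locally univalent map $f$.

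\emph{Main obstacle: sharpness.} The extremal functions $f^{\prime}_{\lambda}(z)=(1-\lambda z^2)^{\beta-1}$ with $|\lambda|=1$ attain the intermediate bounds $(1\mp r^2)^{\beta-1}$ at $z^2=\pm\bar\lambda r^2$. They do not attain the reciprocal expressions stated in the theorem, and neither do the paper's $f_{\beta,\lambda}$ as written. So I expect the real difficulty to be reconciling the sharpness claim with the statement as written. I would prove the sharp intermediate estimates with these extremals, record the stated inequalities as consequences of them, and flag that equality in the stated form can only hold at $z=0$.
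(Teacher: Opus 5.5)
Your route is the paper's route: Schwarz's lemma for $\phi$ with $\phi(0)=0$, a disc containing $zf''/f'$, radial integration of ${\rm Re}(zf''/f')/r$, then the segment and minimum-modulus-curve arguments for growth. The paper assumes $\phi(0)=0$ silently, so your explicit hypothesis $f''(0)=0$ is the right repair, and your counterexample $f'(z)=(1-z)^{2(\beta-1)}$ without it is valid. Your intermediate estimate $(1-r^2)^{\beta-1}\le|f'(z)|\le(1+r^2)^{\beta-1}$ is correct, and it is more accurate than the paper. The disc $\bigl|(1-|z|^4)zf''/f'+2(\beta-1)|z|^4\bigr|\le 2(\beta-1)|z|^2$ derived there has real projection $\bigl[-2(\beta-1)|z|^2/(1-|z|^2),\,2(\beta-1)|z|^2/(1+|z|^2)\bigr]$. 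The paper swaps the two endpoints, and that swap is exactly how it reaches the reciprocal bounds in the statement.

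The gap is your final step. Since $(1-r^2)^{\beta-1}(1+r^2)^{\beta-1}=(1-r^4)^{\beta-1}<1$ for $0<r<1$, the inequality $1-r^4\le 1$ gives $(1-r^2)^{\beta-1}<(1+r^2)^{-(\beta-1)}$, the opposite of what you wrote. So only the two upper bounds of the statement follow from your work, as non-sharp consequences. The lower distortion and lower growth bounds do not follow, and no argument can supply them, because they are false even when $f''(0)=0$. Your own extremal $f'(z)=(1-z^2)^{\beta-1}\in\mathcal{N}(\beta)$ shows this:
\begin{itemize}
\item $|f'(r)|=(1-r^2)^{\beta-1}<(1+r^2)^{-(\beta-1)}$; for $\beta=2$, $r=1/2$ this reads $3/4<4/5$;
\item $f(r)=\int_0^r(1-t^2)^{\beta-1}\,dt<\int_0^r(1+t^2)^{-(\beta-1)}\,dt$.
\end{itemize}
The correct conclusion is that, under $f''(0)=0$, the sharp result is $(1-r^2)^{\beta-1}\le|f'(z)|\le(1+r^2)^{\beta-1}$. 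The matching growth bounds are $\int_0^r(1-t^2)^{\beta-1}dt\le|f(z)|\le\int_0^r(1+t^2)^{\beta-1}dt$, with the lower one for $1<\beta\le 3/2$. Both are attained by $\lambda=1$ at $z=r$ and $z=ir$. The statement as written is false in its lower halves, not merely non-sharp; your plan to record the stated inequalities as consequences must be dropped for those two.

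Two smaller points.
\begin{itemize}
\item The subordination route cannot be justified by convexity of $q(z)=-2(\beta-1)z^2/(1-z^2)$. This $q$ is even, hence not univalent. Apply Suffridge's lemma to the convex map $Q(w)=-2(\beta-1)w/(1-w)$ and use that $zf''/f'$ vanishes to second order, or simply rely on your fallback, which is sound.
\item For $\beta>3/2$, path lifting only shows that $f(\{|z|<r\})$ covers a disc. It gives no pointwise lower bound on $|f|$ on $|z|=r$. Indeed, $f(z)=\int_0^z e^{-k^2t^2}\,dt$ lies in $\mathcal{N}(1+2k^2)$, has $f''(0)=0$, and vanishes somewhere in $\mathbb{D}\setminus\{0\}$ once $k$ is large.
\end{itemize}
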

\noindent The following result for the class \( \mathcal{N}(3/2) \) is a direct consequence of Theorem \ref{Th-2.2}

\begin{cor}
	If $f\in \mathcal{N}(3/2)$ be of the form \eqref{Eq-1.1}, then the sharp inequality 
	\begin{align}
		\frac{1}{(1+|z|^2)^{1/2}}\leq|f^{\prime}(z)|\leq\frac{1}{(1-|z|^2)^{1/2}}	
	\end{align}
	and
	\begin{align}
		\int_{0}^{|z|}\frac{1}{(1+\xi^2)^{1/2}} d|\xi|\leq	|f(z)|\leq\int_{0}^{|z|}\frac{1}{(1-\xi^2)^{1/2}} d|\xi|.
	\end{align}
	All of these estimates are sharp. Equality holds for the function 
	\begin{align*}
		f_{\beta, \lambda}(z)=\int_{0}^{|z|}\frac{1}{(1-\lambda \zeta^2)^{1/2}}d|\zeta|
	\end{align*} 
	for some $\lambda\in\mathbb{C}$ with $|\lambda|=1$.
	
\end{cor}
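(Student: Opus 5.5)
The plan is to deduce the corollary directly from Theorem \ref{Th-2.2} by specializing $\beta=3/2$, so that the exponent $\beta-1$ becomes $1/2$. Concretely, for $f\in\mathcal{N}(3/2)$ I will first recover the distortion bounds themselves, in order to check the mechanism rather than merely quote it. By \eqref{Eq-2.1} there is a Schwarz function $\omega$ with
\begin{align*}
	\frac{f^{\prime\prime}(z)}{f^{\prime}(z)}=-\frac{\omega(z)/z}{1-\omega(z)}.
\end{align*}
Integrating along the radius $[0,z]$ then gives
\begin{align*}
	\log|f^{\prime}(z)|={\rm Re}\int_0^{z}\frac{f^{\prime\prime}(\zeta)}{f^{\prime}(\zeta)}\,d\zeta .
\end{align*}
Bounding the real part of the integrand pointwise yields the two-sided estimate on $|f^{\prime}(z)|$. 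This is exactly the statement of Theorem \ref{Th-2.2}, and plugging in $\beta=3/2$ gives the first displayed pair of inequalities.

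For the growth estimate, the upper bound follows from $|f(z)|\le\int_0^{|z|}|f^{\prime}(te^{i\theta})|\,dt$ together with the upper distortion bound. For the lower bound I will use the standard argument. Take $z_0$ with $|z_0|=r$, and let $\Gamma=f^{-1}([0,f(z_0)])$, the preimage of the segment from $0$ to $f(z_0)$; this requires $f$ to be univalent. Here that holds because $\mathcal{N}(3/2)\subset\mathcal{S}^*$, as recalled in the introduction. Then
\begin{align*}
	|f(z_0)|=\int_\Gamma|f^{\prime}(\zeta)|\,|d\zeta|\ge\int_0^{r}\frac{dt}{(1+t^2)^{1/2}} .
\end{align*}
Again this is just Theorem \ref{Th-2.2} specialized to $\beta=3/2$.

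For sharpness I will take $f^{\prime}(z)=(1-\lambda z^2)^{-1/2}$ with $|\lambda|=1$. I will check that it belongs to $\mathcal{N}(3/2)$ by computing
\begin{align*}
	1+\frac{zf^{\prime\prime}}{f^{\prime}}=1+\frac{\lambda z^2}{1-\lambda z^2}=\frac{1}{1-\lambda z^2},
\end{align*}
whose real part lies in $(1/2,\infty)$. This is where I expect the main obstacle. Membership in $\mathcal{N}(3/2)$ requires real part below $3/2$, whereas this computation gives a real part bounded \emph{below} by $1/2$. So the extremal for the stated bounds must be checked carefully against the definition, and I may need to instead use the function $f^{\prime}(z)=(1+\lambda z)^{-2(\beta-1)}$-type analogue adapted to the subordination \eqref{Eq-1.3} with $\omega(z)=\lambda z^2$.

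Once a valid extremal is fixed, I will evaluate it on the ray where $\lambda z^2=\pm|z|^2$. On that ray $|f^{\prime}|$ equals the two distortion bounds and $|f|$ equals the two integrals, which shows that all four estimates are attained.
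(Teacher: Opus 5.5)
Your plan cannot be completed, because the statement itself is false. The corollary is just Theorem \ref{Th-2.2} at $\beta=3/2$ (that is all the paper's proof does), and Theorem \ref{Th-2.2} is wrong. The step that fails is your claim that bounding the integrand pointwise ``is exactly the statement of Theorem \ref{Th-2.2}''. Carry it out: $zf''/f'=-\omega/(1-\omega)$ with $|\omega(z)|\le|z|=t$ gives $-t/(1+t)\le{\rm Re}\,\frac{\omega}{1-\omega}\le t/(1-t)$, hence
\[
-\frac{1}{1-t}\le{\rm Re}\Big(e^{i\theta}\frac{f''(te^{i\theta})}{f'(te^{i\theta})}\Big)\le\frac{1}{1+t},\qquad\text{so}\qquad 1-|z|\le|f'(z)|\le 1+|z|.
\]
This is Lemma \ref{Lem-7.1}(b) for $\beta=3/2$, so the paper contradicts itself here. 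These bounds are attained by $f(z)=z\mp z^2/2$, both of which lie in $\mathcal{N}(3/2)$ (the paper itself uses $f_{3/2}'=1-z$). They violate all four displayed estimates at $z=1/2$:
\begin{itemize}
\item $f(z)=z+z^2/2$ has $|f'|=3/2>2/\sqrt3$ and $|f|=5/8>\pi/6=\arcsin\frac12$;
\item $f(z)=z-z^2/2$ has $|f'|=1/2<2/\sqrt5$ and $|f|=3/8<\log\frac{1+\sqrt5}{2}=\int_0^{1/2}(1+t^2)^{-1/2}\,dt$.
\end{itemize}

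Assuming $f''(0)=0$ does not repair this. The proof of Theorem \ref{Th-2.2} silently makes that assumption when it asserts $\phi(0)=0$; in fact $\phi(0)=-f''(0)/(2(\beta-1))$. Under it, $|\omega(z)|\le|z|^2$, and with $W=zf''/f'$ the disk $|(1-r^4)W+2(\beta-1)r^4|\le 2(\beta-1)r^2$ gives $-2(\beta-1)r^2/(1-r^2)\le{\rm Re}\,W\le 2(\beta-1)r^2/(1+r^2)$. The paper swaps the two denominators at exactly this point. For $\beta=3/2$ the correct conclusion is $(1-|z|^2)^{1/2}\le|f'(z)|\le(1+|z|^2)^{1/2}$, so the claimed lower bound still fails, e.g.\ for $f'(z)=(1-z^2)^{1/2}$.

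This is also why your sharpness step stalls. The proposed extremal $f'=(1-\lambda z^2)^{-1/2}$ satisfies ${\rm Re}(1+zf''/f')>1/2$. It is extremal for convex functions of order $1/2$ with $f''(0)=0$, which is the class the stated bounds actually describe. Realizing \eqref{Eq-1.3} with $\omega=\lambda z^2$ correctly gives $f'=(1-\lambda z^2)^{1/2}$, which attains $(1\pm|z|^2)^{1/2}$, not the stated bounds. A negative exponent such as $(1+\lambda z)^{-1}$ again lands in ${\rm Re}(1+zf''/f')>1/2$.

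So no valid extremal exists. What is true and sharp for $\mathcal{N}(3/2)$ is $1-|z|\le|f'(z)|\le 1+|z|$ and $|z|-|z|^2/2\le|f(z)|\le|z|+|z|^2/2$, with equality for $z\mp z^2/2$ at $z=r$. Your univalence argument does give the lower growth bound there.
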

\begin{proof}[\bf Proof of Theorem \ref{Th-2.2}]
	Let $f\in\mathcal{N}(\beta)$, and from \eqref{Eq-2.3}, we obtain $\phi(0)=0$. Then  by using the Schwarz lemma, we have
	\begin{align*}
		\bigg|\frac{\frac{f^{\prime\prime}(z)}{f^{\prime}(z)}}{\frac{zf^{\prime\prime}(z)}{f^{\prime}(z)}-2(\beta-1)}\bigg|^2\leq|z|^2
	\end{align*}
	which implies that
	\begin{align*}
		\bigg|\frac{f^{\prime\prime}(z)}{f^{\prime}(z)}\bigg|^2\leq4\left(\beta-1\right)^2|z|^2-4(\beta-1)|z|^2{\rm Re}\left(\frac{zf^{\prime\prime}(z)}{f^{\prime}(z)}\right)+|z|^4\bigg|\frac{zf^{\prime\prime}(z)}{f^{\prime}(z)}\bigg|^2.
	\end{align*}
	Thus, we have
	\begin{align}
		(1-|z|^4)\bigg|\frac{f^{\prime\prime}(z)}{f^{\prime}(z)}\bigg|^2\leq4\left(\beta-1\right)^2|z|^2-4(\beta-1)|z|^2{\rm Re}\left(\frac{zf^{\prime\prime}(z)}{f^{\prime}(z)}\right).
	\end{align}
	Multiplying both sides the above inequality by $(1-|z|^4)$, we obtain 
	\begin{align*}
		(1-|z|^4)^2&\bigg|\frac{f^{\prime\prime}(z)}{f^{\prime}(z)}\bigg|^2+4\left(\beta-1\right)|z|^2(1-|z|^4){\rm Re}\left(\frac{zf^{\prime\prime}(z)}{f^{\prime}(z)}\right)\\&\leq4\left(\beta-1\right)^2|z|^2(1-|z|^4).
	\end{align*}
	Adding  $\left(2(\beta-1)|z|^2\bar{|z|}\right)^2$ both side of the above inequality, leads to
	\begin{align*}
		(1-|z|^4)^2&\bigg|\frac{f^{\prime\prime}(z)}{f^{\prime}(z)}\bigg|^2+4\left(\beta-1\right)|z|^2(1-|z|^4){\rm Re}\left(\frac{zf^{\prime\prime}(z)}{f^{\prime}(z)}\right)+4\left(\beta-1\right)^2|z|^4|\bar{z}|^2\\&\leq4\left(\beta-1\right)^2|z|^2(1-|z|^4)+4\left(\beta-1\right)^2|z|^4|\bar{z}|^2\nonumber.
	\end{align*}
	Multiplying both side by $|z|$, then by simple calculation, we obtain
	\begin{align*}
		\bigg|(1-|z|^4)\frac{zf^{\prime\prime}(z)}{f^{\prime}(z)}+2\left(\beta-1\right)|z|^4\bigg|\leq2\left(\beta-1\right)|z|^2
	\end{align*}
	which implies that
	\begin{align*}
		\frac{-2\left(\beta-1\right)|z|^2}{1+|z|^2}\leq{\rm Re}\left(\frac{zf^{\prime\prime}(z)}{f^{\prime}(z)}\right)\leq\frac{2\left(\beta-1\right)|z|^2}{1-|z|^2}.
	\end{align*}
	Let $z=re^{i\theta}$. It is easy to see that
	\begin{align*}
		\frac{-2\left(\beta-1\right) r}{1+r^2}\leq\dfrac{\partial}{\partial r}\left(\log|f^{\prime}(re^{i\theta})|\right)\leq\frac{2\left(\beta-1\right) r}{1-r^2}.
	\end{align*}
	Integrating the above estimate with respect to $r$, we obtain
	\begin{align*}
		\frac{1}{(1+|z|^2)^{\beta-1}}\leq|f^{\prime}(z)|\leq\frac{1}{(1-|z|^2)^{\beta-1}}.
	\end{align*}
	Next, the growth part of the theorem follows from the upper bound
	\begin{align*}
		|f^{\prime}(re^{i\theta})|=\bigg|\int_{0}^{r}f^{\prime}(re^{i\theta})e^{i\theta} dt\bigg|\leq\int_{0}^{r}|f^{\prime}(re^{i\theta})| dt\leq\int_{0}^{r}\frac{1}{(1-t^2)^{\beta-1}} dt
	\end{align*}
	which implies that
	\begin{align*}
		|f(z)|\leq\int_{0}^{|z|}\frac{1}{(1-\xi^2)^{\beta-1}} d|\xi|
	\end{align*}
	for all $z\in\mathbb{D}$.\vspace{2mm} 
	
	It is well-known that if $f(z_0)$ is a point of minimum modulus on the image of the circle $|z|=r$ and $\gamma=f^{-1}(\Gamma)$, where $\Gamma$ is the line segment from $0$ to $f(z_0)$, then 
	\begin{align*}
		|f(z)|\geq	|f(z_0)|=\int_{\Gamma}|dw|=\int_{\gamma}|f^{\prime}(\zeta)||d\zeta|\geq\int_{0}^{r}\frac{1}{(1+\xi^2)^{\beta-1}} d|\xi|.
	\end{align*}
	Thus all the desired inequalities are established. For the function \( f_{\beta, \lambda}(z) \) in the main result, the sharpness part can be shown easily, hence we omit the details.
\end{proof}
We will find the sharp bound of the pre-Schwarzian and Schwarzian norms in terms of value $f^{\prime\prime}(0)$ in the class $\mathcal{N}(\beta)$, under the assumption  that $f^{\prime\prime}(0)=0$. The following lemma will paly a key role to prove the result.
\begin{lemA}{\rm \cite{Carrasco-Hernández-AMP-2023}}
	If $\phi(z):\mathbb{D}\rightarrow\mathbb{D}$ be analytic function, then 
	\begin{align*}
		\frac{|\phi(z)|^2}{1-|\phi(z)|^2}\leq\frac{(\phi(0)+|z|)^2}{(1-|\phi(0)|)^2(1-|z|^2)|)}
	\end{align*}
\end{lemA}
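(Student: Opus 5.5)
The plan is to reduce the inequality to the Schwarz--Pick estimate for the modulus of $\phi(z)$, and then use monotonicity of the map $t\mapsto t/(1-t)$. We read $\phi(0)$ in the numerator as $|\phi(0)|$; this is the natural reading, since the right-hand side must be real. Write $a=|\phi(0)|<1$ and $r=|z|$.

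\emph{Step 1 (Schwarz--Pick).} Put $\psi(z)=\bigl(\phi(z)-\phi(0)\bigr)/\bigl(1-\overline{\phi(0)}\phi(z)\bigr)$. Then $\psi$ maps $\mathbb{D}$ into $\mathbb{D}$ and $\psi(0)=0$, so the Schwarz lemma gives $|\psi(z)|\le r$. Inverting the disc automorphism, $\phi(z)$ lies in the image of the disc $\{|w|\le r\}$ under $w\mapsto (w+\phi(0))/(1+\overline{\phi(0)}w)$. Taking the maximal modulus over that image gives the classical bound
\begin{align*}
|\phi(z)|\le \frac{a+r}{1+ar}.
\end{align*}
One can also quote this directly as the standard corollary of the Schwarz--Pick lemma.

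\emph{Step 2 (monotonicity and algebra).} The function $t\mapsto t/(1-t)$ is increasing on $[0,1)$, so we may substitute $t=|\phi(z)|^2\le \bigl((a+r)/(1+ar)\bigr)^2$. The key identity is
\begin{align*}
(1+ar)^2-(a+r)^2=(1-a^2)(1-r^2).
\end{align*}
From it we obtain
\begin{align*}
\frac{|\phi(z)|^2}{1-|\phi(z)|^2}\le \frac{(a+r)^2}{(1-a^2)(1-r^2)}.
\end{align*}
This is the sharp form of the lemma, with equality for disc automorphisms at suitable $z$.

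\emph{Step 3 (the stated form).} Since $0\le a<1$, we have $(1-a)^2=(1-a)(1-a)\le(1-a)(1+a)=1-a^2$. Therefore the bound of Step 2 is at most $(a+r)^2/\bigl((1-a)^2(1-r^2)\bigr)$, which is the inequality as displayed in Lemma A.

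None of these steps is a genuine obstacle. The only point requiring care is Step 1: one must maximize the modulus over the image disc, or simply cite the standard Schwarz--Pick growth bound, rather than bound $|\phi(z)-\phi(0)|$ crudely. The remaining work is interpreting the slightly garbled denominator in the statement. I would prove the sharper $(1-|\phi(0)|^2)$ version and note that it implies the stated one.
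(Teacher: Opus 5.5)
Your proof is correct. The Schwarz--Pick bound $|\phi(z)|\le (a+r)/(1+ar)$, the monotonicity of $t\mapsto t/(1-t)$, and the identity $(1+ar)^2-(a+r)^2=(1-a^2)(1-r^2)$ give the sharp inequality. The estimate $(1-a)^2\le 1-a^2$ then gives the displayed form. Reading $\phi(0)$ as $|\phi(0)|$ is the only sensible interpretation, since otherwise the right-hand side is not real.

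The paper does not prove Lemma A; it only quotes it from Carrasco--Hernández. So there is no argument of the paper's to compare yours against.

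Your intermediate bound with $1-|\phi(0)|^2$ is in fact the version the paper uses in the proof of Theorem~\ref{Th-2.5}. There it writes $(\gamma+|z|)^2/\bigl((1-\gamma^2)(1-|z|^2)\bigr)$ and then uses $(1+\gamma)^2/(1-\gamma^2)=(1+\gamma)/(1-\gamma)$. The weaker displayed form with $(1-|\phi(0)|)^2$ would only give $\bigl((1+\gamma)/(1-\gamma)\bigr)^2$ at that step. Proving the sharper statement and deducing the stated one from it is therefore the right choice.
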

We establish a sharp bound on the pre-Schwarzian norm for  $f\in\mathcal{N}(\beta)$.
\begin{thm}\label{Th-2.3}
	For $\beta>1$, let $f\in\mathcal{N}(\beta)$ and for all $z\in\mathbb{D}$, then the pre-Schwarzian norm estimate
	\begin{align*}
		||P_f||\leq2\left(\beta-1\right)  .
	\end{align*}
	The inequality is sharp.
\end{thm}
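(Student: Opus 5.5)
The plan is to work directly from the subordination representation \eqref{Eq-2.1} used in the proof of Theorem \ref{Th-2.1}. For $f\in\mathcal{N}(\beta)$ there is a Schwarz function $\omega(z)=z\phi(z)$, with $\phi:\mathbb{D}\to\overline{\mathbb{D}}$ analytic, such that
\begin{align*}
P_f(z)=\frac{f''(z)}{f'(z)}=-\frac{2(\beta-1)\phi(z)}{1-z\phi(z)}.
\end{align*}
Hence, for $|z|=r$,
\begin{align*}
(1-|z|^2)|P_f(z)|\le 2(\beta-1)\frac{(1-r^2)|\phi(z)|}{1-r|\phi(z)|}.
\end{align*}
The function $t\mapsto t/(1-rt)$ is increasing on $[0,1]$, so the right-hand side is at most $2(\beta-1)(1-r^2)/(1-r)=2(\beta-1)(1+r)$. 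Equivalently, this bound follows from the triangle inequality applied to Theorem \ref{Th-2.1}(iii).

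This gives $\sup_{z\in\mathbb{D}}(1-|z|^2)|P_f(z)|\le 4(\beta-1)$. It is attained in the limit by the function in \eqref{Eq-22.11}. Indeed, $f'(z)=(1-z)^{2(\beta-1)}$ gives $P_f(z)=-2(\beta-1)/(1-z)$, so at $z=r\to1^-$ we get $(1-r^2)|P_f(r)|=2(\beta-1)(1+r)\to 4(\beta-1)$. The same extremal works for every $\beta>1$, since $\mathrm{Re}\{1+zf''/f'\}=1-2(\beta-1)\mathrm{Re}\{z/(1-z)\}<\beta$ holds because $\mathrm{Re}\{z/(1-z)\}>-1/2$.

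The main obstacle is therefore not analytic but a matter of normalization. With the Poincaré density $\eta_{\mathbb{D}}(z)=1/(1-|z|^2)$, the sharp constant is $4(\beta-1)$, not $2(\beta-1)$. The stated bound $\|P_f\|\le 2(\beta-1)$ is exactly the same sharp estimate if one uses the curvature $-1$ density $\eta_{\mathbb{D}}(z)=2/(1-|z|^2)$, which the definition $\|P_f\|_{\Omega}=\sup|P_f|\eta_{\Omega}^{-1}$ permits. In the proof I would fix this convention explicitly, so that $\|P_f\|=\sup_{z}\tfrac12(1-|z|^2)|P_f(z)|$. I would then present the two steps above: the upper bound via the monotonicity in $|\phi|$, and sharpness via $z=r\to1^-$ for the function in \eqref{Eq-22.11}. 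I would also remark that in the more common normalization the same argument yields the sharp value $4(\beta-1)$.
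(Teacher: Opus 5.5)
Your computation is right, and it exposes a real defect in the statement as written. The function $f'(z)=(1-z)^{2(\beta-1)}$ lies in $\mathcal{N}(\beta)$ and has $\sup_{z}(1-|z|^2)|P_f(z)|=4(\beta-1)$. But your repair, reading $\eta_{\mathbb{D}}$ as $2/(1-|z|^2)$, is not available here, and it misidentifies where the factor $2$ comes from.

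The paper fixes $\|P_f\|=\sup_{z}(1-|z|^2)|P_f(z)|$ explicitly in several places:
\begin{itemize}
\item in this very proof;
\item in the subsection on the sharpness of Corollary~\ref{Cor-2.1};
\item in Theorem~\ref{Th-3.1};
\item in Theorem~\ref{Th-2.4}, which writes $\|S_f\|=(1-|z|^2)^2|S_f(z)|$ and, with the same $\eta$, forces $\eta_{\mathbb{D}}=1/(1-|z|^2)$.
\end{itemize}
The benchmarks quoted in the introduction are also all in that normalization: $\|P_f\|\le 6$ on $\mathcal{S}$, Becker's $\|P_f\|\le 1$, and Kraus--Nehari's $6$ and $2$. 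In fact Theorem~\ref{Th-3.1} uses exactly your general bound $(1-|z|^2)|h''/h'|\le 2(\beta-1)(1+|z|)$, which is where its constant $4(\beta-1)+1$ comes from. So what you prove is a rescaling of the $4(\beta-1)$ estimate, not the theorem the paper intends.

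The missing ingredient is the hypothesis $f''(0)=0$. The theorem statement omits it, but the paper imposes it: it is announced in the paragraph before Lemma~A, and the proof starts by writing $\phi(z)=z\xi(z)$. Since $f''(0)=-2(\beta-1)\phi(0)$, this means $\phi(0)=0$. Schwarz's lemma then gives $|\phi(z)|\le |z|=r$, and your own monotonicity step yields, in the standard normalization,
\[
(1-|z|^2)|P_f(z)|\le 2(\beta-1)\,\frac{(1-r^2)\,r}{1-r^2}=2(\beta-1)\,r<2(\beta-1).
\]
Sharpness comes from $f'(z)=(1-z^2)^{\beta-1}$. This function lies in $\mathcal{N}(\beta)$, has $f''(0)=0$, and satisfies $(1-r^2)|P_f(r)|=2(\beta-1)r\to 2(\beta-1)$.

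The paper's displayed extremal $\int_0^z(1-\xi^2)^{-(\beta-1)}\,d\xi$ has the wrong sign in the exponent. For it $\mathrm{Re}\{1+zf''/f'\}$ is unbounded above, so it is not in $\mathcal{N}(\beta)$, although $|P_f|$ is the same.

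The correct fix is to add $f''(0)=0$ to the hypotheses, or else to state the sharp general bound $4(\beta-1)$ that you found. Changing $\eta_{\mathbb{D}}$ is not the right fix.
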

The following result for the class \( \mathcal{N}(3/2) \) is a direct consequence of Theorem \ref{Th-2.3}.
\begin{cor}\label{Cor-2.1}
	For $\beta=3/2$, let $f\in\mathcal{N}(3/2)$ and for all $z\in\mathbb{D}$, then the pre-Schwarzian norm estimate
	\begin{align*}
		||P_f||\leq1 .
	\end{align*}
	The inequality is sharp.
\end{cor}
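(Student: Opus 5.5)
The plan is to obtain Corollary \ref{Cor-2.1} by specializing Theorem \ref{Th-2.3} to $\beta=3/2$, where $2(\beta-1)=1$. To keep the argument self-contained, and to make clear which normalization of the norm is in force, I would also rederive the bound directly. The norm must be read with the Poincar\'e density $\eta_{\mathbb{D}}(z)=2/(1-|z|^2)$, so that
\begin{align*}
\|P_f\|=\sup_{z\in\mathbb{D}}\tfrac12(1-|z|^2)\Big|\tfrac{f''(z)}{f'(z)}\Big|.
\end{align*}
This factor $\tfrac12$ matters. With the unnormalized convention $\sup(1-|z|^2)|P_f|$, the extremal function below gives the value $2$, not $1$. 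I therefore read Theorem \ref{Th-2.3} with the factor $\tfrac12$ included.

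\emph{Direct bound.} For $f\in\mathcal{N}(3/2)$, the representation \eqref{Eq-2.1} with $\beta=3/2$ gives
\begin{align*}
\frac{f''(z)}{f'(z)}=-\frac{\phi(z)}{1-z\phi(z)},
\end{align*}
where $\phi:\mathbb{D}\to\overline{\mathbb{D}}$ is analytic, since $\omega(z)=z\phi(z)$. Hence
\begin{align*}
|P_f(z)|\le \frac{|\phi(z)|}{1-|z||\phi(z)|}\le \frac{1}{1-|z|}.
\end{align*}
The second step holds because $t\mapsto t/(1-|z|t)$ is increasing on $[0,1]$. Multiplying by $\tfrac12(1-|z|^2)$ gives $\tfrac12(1+|z|)\le 1$, so $\|P_f\|\le 1$.

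Alternatively, Corollary \ref{Cor-2.2} gives $(1-|z|^2)|P_f|\le 1+|z|$ by the triangle inequality, which is the same estimate. Either route is immediate.

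\emph{Sharpness.} I would use the extremal function of Theorem \ref{Th-2.1} with $\beta=3/2$, namely $f_{3/2}'(z)=1-z$. The example following Corollary \ref{Cor-2.2} already shows $f_{3/2}\in\mathcal{N}(3/2)$. Here $P_{f_{3/2}}(z)=-1/(1-z)$, so at $z=r\in(0,1)$ we get
\begin{align*}
\tfrac12(1-r^2)|P_{f_{3/2}}(r)|=\tfrac12(1+r)\to 1 \quad (r\to1^-).
\end{align*}
Thus the supremum $1$ is attained in the limit, which proves sharpness.

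\emph{Main obstacle.} The only real obstacle is the normalization issue above: the constant $1$ is correct only with $\eta_{\mathbb{D}}=2/(1-|z|^2)$. The remaining steps are the elementary monotonicity estimate and the boundary limit for the extremal function.
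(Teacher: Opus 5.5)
Your estimates are correct, but you fix the discrepancy in the wrong place, so what you prove is not the paper's corollary. You are right that the statement as printed fails under the normalization $\|P_f\|=\sup_{z\in\mathbb{D}}(1-|z|^2)|P_f(z)|$, since $f'(z)=1-z$ gives the value $2$. However, that is the normalization the paper uses throughout:
\begin{itemize}
\item the sharpness computation right after the corollary writes $\|P_{f_{3/2}}\|=\sup(1-|z|^2)|P_{f_{3/2}}|$;
\item Theorem \ref{Th-3.1} uses the same expression, and its proof contains exactly your estimate $(1-|z|^2)|h''/h'|\le 2(\beta-1)(1+|z|)$;
\item the constants quoted in the introduction (the sharp bound $6$ for univalent functions, Becker's criterion $\|P_f\|\le 1$) are those for $\eta_{\mathbb{D}}=1/(1-|z|^2)$.
\end{itemize}
Under this norm your argument gives only $\|P_f\|\le 2$, so your claim that the constant $1$ is correct \emph{only} for the density $2/(1-|z|^2)$ is false. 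The paper instead relies on the standing assumption $f''(0)=0$, announced just before Lemma A. It enters the proof of Theorem \ref{Th-2.3} through the factorization $\phi(z)=z\xi(z)$.

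The missing step is therefore the Schwarz lemma applied to $\phi$. From $f''/f'=-2(\beta-1)\phi/(1-z\phi)$ you get $\phi(0)=-f''(0)/(2(\beta-1))=0$, hence $|\phi(z)|\le |z|$. Your monotonicity step then gives $|P_f(z)|\le |z|/(1-|z|^2)$, i.e. $(1-|z|^2)|P_f(z)|\le |z|<1$. Sharpness needs a function with $f''(0)=0$, so $f'(z)=1-z$ is excluded. Take $f'(z)=(1-z^2)^{1/2}$: then $1+zf''(z)/f'(z)=1-z^2/(1-z^2)$ has real part $<3/2$, and $(1-r^2)|P_f(r)|=r\to 1$. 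Do not reuse the paper's displayed extremal $\int_0^z(1-\xi^2)^{-(\beta-1)}\,d\xi$. With the negative exponent, $1+zf''/f'=1+2(\beta-1)z^2/(1-z^2)$ has unbounded real part, so that function is not in $\mathcal{N}(\beta)$. Your computation with $f'(z)=1-z$ is still worth keeping as a remark. It shows the hypothesis $f''(0)=0$ cannot be dropped: in the paper's normalization the sharp bound over all of $\mathcal{N}(3/2)$ is $2$.
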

\begin{proof}[\bf Proof of Theorem \ref{Th-2.3}]
	Since $\phi(z)=z\xi(z)$, with $|\xi(z)|<1$, then in \eqref{Eq-2.3} we obtain
	\begin{align*}
		\sup_{z\in\mathbb{D}}(1-|z|^2)\bigg|\frac{f^{\prime\prime}(z)}{f^{\prime}(z)}\bigg|&\leq\sup_{z\in\mathbb{D}}(1-|z|^2)\frac{2\left(\beta-1\right)|z\xi(z)|}{1-|z|^2|\xi(z)|}\\&\leq2\left(\beta-1\right) \sup_{0\leq r\leq 1}\frac{r(1-r^2)}{(1-r^2)}\\&=2\left(\beta-1\right).
	\end{align*}
	Thus, we see that $||P_f||\leq2\left(\beta-1\right)$. 
	To show that the inequality is sharp, we consider the extremal function is given by
	\begin{align*}
		f_{\beta}(z)=	\int_{0}^{z}\frac{1}{(1-\xi^2)^{\left(\beta-1\right)}} d\xi.
	\end{align*}
	It can be easily shown that $||P_{f_{\beta}}||=2\left(\beta-1\right).$ This completes the proof.
\end{proof}
\subsection{\bf Sharpness of Corollary \ref{Cor-2.1}}
For $\beta=3/2$, it follows from  that
\begin{align*}
	\frac{f_{3/2}^{\prime\prime}}{f_{3/2}^{\prime}}(z)=-\frac{1}{(1-z^2)}\;\;\mbox{and}\;\;|P_{f_{3/2}}|=\frac{1}{1-|z|^2}.
\end{align*}
A simple computation thus yields that
\begin{align*}
	||P_{f_{3/2}}||=\sup_{z\in\mathbb{D}}\left(1-z^2\right)|P_{f_{3/2}}|=\sup_{z\in\mathbb{D}}\left(1-|z|^2\right)\frac{1}{1-|z|^2}=1
\end{align*}
and we see the constant $1$ is best possible.\vspace{2mm}

\begin{thm}\label{Th-2.4}
	For $\beta>1$, let $f\in\mathcal{N}(\beta)$,   for all $z\in\mathbb{D}$, then the Schwarzian norm estimate
	\begin{align*}
		||S_f||=(1-|z|^2)^2|S_f(z)|\leq2\left(\beta^2-1\right).
	\end{align*}
	The inequality is sharp.
\end{thm}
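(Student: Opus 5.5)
Throughout I work, as in the proof of Theorem \ref{Th-2.3}, under the normalization $f''(0)=0$ announced before Lemma A, i.e.\ $\phi(0)=0$ in \eqref{Eq-2.2}. Without it the bound fails. For $|\lambda|=1$ the function with $f'(z)=(1-\lambda z)^{2(\beta-1)}$ lies in $\mathcal{N}(\beta)$. It has $S_f(z)=-2\beta(\beta-1)\lambda^2/(1-\lambda z)^2$, hence $\|S_f\|=8\beta(\beta-1)>2(\beta^2-1)$.

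\emph{Step 1: express $S_f$ through $\phi$.} Put $a=\beta-1$. From \eqref{Eq-2.1} with $\omega=z\phi$ we have $P_f=-2a\phi/(1-z\phi)$. Differentiating gives
\begin{align*}
P_f'=-\frac{2a(\phi'+\phi^2)}{(1-z\phi)^2},\qquad \frac12P_f^2=\frac{2a^2\phi^2}{(1-z\phi)^2},
\end{align*}
and therefore
\begin{align*}
S_f(z)=-\frac{2(\beta-1)\bigl(\phi'(z)+\beta\,\phi(z)^2\bigr)}{(1-z\phi(z))^2}.
\end{align*}

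\emph{Step 2: estimate.} Apply the Schwarz--Pick inequality $|\phi'(z)|\le(1-|\phi(z)|^2)/(1-|z|^2)$ and $|1-z\phi|\ge 1-|z||\phi|$. Writing $r=|z|$ and $t=|\phi(z)|$, this gives
\begin{align*}
(1-r^2)^2|S_f(z)|\le 2(\beta-1)\left[\frac{(1-r^2)(1-t^2)}{(1-rt)^2}+\beta\left(\frac{(1-r^2)t}{1-rt}\right)^2\right].
\end{align*}
The first fraction is at most $1$, by the identity $(1-rt)^2-(1-r^2)(1-t^2)=(r-t)^2$. For the second, the hypothesis $\phi(0)=0$ and the Schwarz lemma give $t\le r$. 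Then $(1-r^2)t\le 1-rt$ follows from $t(1+r-r^2)\le r+r^2-r^3\le 1$, since the cubic $r+r^2-r^3$ is increasing on $[0,1]$ and equals $1$ at $r=1$. Hence $(1-r^2)^2|S_f(z)|\le 2(\beta-1)(1+\beta)=2(\beta^2-1)$.

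\emph{Step 3: sharpness.} Take $f_\beta$ from the proof of Theorem \ref{Th-2.3}, so $f_\beta'=(1-z^2)^{-(\beta-1)}$, corresponding to $\phi(z)=z$ up to the sign convention. For this function
\begin{align*}
S_{f_\beta}(z)=-\frac{2(\beta-1)(1+\beta z^2)}{(1-z^2)^2}.
\end{align*}
Letting $z=r\to1^-$ gives $(1-r^2)^2|S_{f_\beta}(r)|\to2(\beta-1)(1+\beta)$. The main obstacle is the second term in Step 2. It is bounded by $1$ only because $t\le r$, which is exactly where the condition $f''(0)=0$ is needed.
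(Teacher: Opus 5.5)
Your bound in Steps 1--2 is correct and follows essentially the paper's route. Both arguments use the same formula $S_f=-2(\beta-1)(\phi'+\beta\phi^2)/(1-z\phi)^2$, Schwarz--Pick for $\phi'$, and the Schwarz lemma $|\phi(z)|\le|z|$ coming from $f''(0)=0$. The one difference is how $|1-z\phi|$ is handled. The paper keeps it exact and absorbs it via $1-|\Psi|^2=(1-|\phi|^2)(1-|z|^2)/|1-z\phi|^2$ with $\Psi=(\bar z-\phi)/(1-z\phi)$. You replace it by $1-rt$ and use $(1-rt)^2-(1-r^2)(1-t^2)=(r-t)^2$, which is the same fact in real form. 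Your cubic argument for the second term works, though monotonicity of $t\mapsto t/(1-rt)$ gives $(1-r^2)t/(1-rt)\le r$ more directly. Your remark that the statement fails without $f''(0)=0$ (via $f'=(1-\lambda z)^{2(\beta-1)}$, norm $8\beta(\beta-1)$) is correct, and the paper's proof likewise invokes $\phi(0)=0$.

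The error is in Step 3: the function you take is not in the class, and the Schwarzian you display is not its Schwarzian.
\begin{itemize}
\item With $f_\beta'=(1-z^2)^{-(\beta-1)}$ one gets $1+zf_\beta''/f_\beta'=1+2(\beta-1)z^2/(1-z^2)$. At $z=r$ this exceeds $\beta$ as soon as $r^2>1/3$ and tends to $+\infty$, so $f_\beta\notin\mathcal{N}(\beta)$.
\item For this function $S_{f_\beta}=2(\beta-1)\bigl(1+(2-\beta)z^2\bigr)/(1-z^2)^2$, not $-2(\beta-1)(1+\beta z^2)/(1-z^2)^2$.
\end{itemize}
Flipping the exponent is not a sign convention; it changes class membership.

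The function that actually corresponds to $\phi(z)=z$, i.e.\ $\omega(z)=z^2$, is $f'(z)=(1-z^2)^{\beta-1}$.
\begin{itemize}
\item It satisfies $1+zf''/f'=1-2(\beta-1)z^2/(1-z^2)$. This has real part $<1+(\beta-1)=\beta$, because $\mathrm{Re}\,\bigl(w/(1-w)\bigr)>-1/2$ for $|w|<1$.
\item It has $f''(0)=0$.
\item The formula you wrote, $S_f=-2(\beta-1)(1+\beta z^2)/(1-z^2)^2$, is exactly its Schwarzian. Hence $(1-r^2)^2|S_f(r)|=2(\beta-1)(1+\beta r^2)\to2(\beta^2-1)$.
\end{itemize}
The paper's Example 2.1 contains the same slip, which you inherited by pointing to the function from the proof of Theorem 2.3. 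With this replacement, your proof is complete.
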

The following result for the class $\mathcal{N}(3/2)$ follows directly from Theorem \ref{Th-2.4}.
\begin{cor}
	For $\beta=3/2$, let $f\in \mathcal{N}(3/2)$ be of the form \eqref{Eq-1.3}, then the Schwarzian norm estimate 
	\begin{align*}
		||S_f||=(1-|z|^2)^2|S_f(z)|\leq\frac{5}{2}.
	\end{align*}
	The estimate is sharp.
\end{cor}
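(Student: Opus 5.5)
The plan is to express $S_f$ through the Schwarz function from the proof of Theorem \ref{Th-2.1}, bound it with the Schwarz--Pick lemma, and then maximize a two-variable function. By \eqref{Eq-2.1}, with $\omega(z)=z\phi(z)$ and $\phi:\mathbb{D}\to\overline{\mathbb{D}}$ analytic, we have
\begin{align*}
P_f(z)=\frac{f''(z)}{f'(z)}=-\frac{2(\beta-1)\phi(z)}{1-z\phi(z)}.
\end{align*}
Differentiating gives
\begin{align*}
P_f'(z)=-\frac{2(\beta-1)\bigl(\phi'(z)+\phi(z)^2\bigr)}{(1-z\phi(z))^2}.
\end{align*}
Substituting into $S_f=P_f'-\tfrac12 P_f^2$ then yields the closed form
\begin{align*}
S_f(z)=-\frac{2(\beta-1)\bigl(\phi'(z)+\beta\,\phi(z)^2\bigr)}{(1-z\phi(z))^2}.
\end{align*}
This identity is the backbone of the argument. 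Everything else is estimation.

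Next, apply the Schwarz--Pick inequality $|\phi'(z)|\le (1-|\phi(z)|^2)/(1-|z|^2)$ and the bound $|1-z\phi(z)|\ge 1-|z||\phi(z)|$. Writing $r=|z|$ and $t=|\phi(z)|\in[0,1]$, this gives
\begin{align*}
(1-|z|^2)^2|S_f(z)|\le 2(\beta-1)\,\Psi(r,t),\qquad
\Psi(r,t)=\frac{(1-r^2)(1-t^2)+\beta(1-r^2)^2t^2}{(1-rt)^2}.
\end{align*}
The claimed estimate $2(\beta^2-1)=2(\beta-1)(\beta+1)$ then amounts to showing $\sup_{0\le r<1,\,0\le t\le 1}\Psi(r,t)\le \beta+1$.

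I expect this maximization to be the main obstacle. For fixed $r$, $\Psi$ is a ratio of quadratics in $t$, so the first step is to locate its critical points in $t$, then compare with the endpoint values $t=0$, where $\Psi=1-r^2$, and $t=1$, where $\Psi=\beta(1+r)^2$. The endpoint $t=1$ is already a warning sign: as $r\to1$ it gives $4\beta$, which exceeds $\beta+1$. So the crude Schwarz--Pick route cannot close the argument as stated.

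I would therefore first try to sharpen the input. One option is to use Lemma A and the normalization encoded in $\phi$, for instance $\phi(0)=-a_2/(\beta-1)$, to couple $t$ with $r$ and cut away the region $t\approx1$, $r\approx1$. This coupling is exactly what would be needed to reduce the supremum to $\beta+1$. I would then check whether such a restriction is legitimate.

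To test whether the bound can be attained at all, I would evaluate $\|S_f\|$ on the functions of Theorem \ref{Th-2.1}, namely $f'(z)=(1-z)^{2(\beta-1)}$, and on $f_\beta$ from Theorem \ref{Th-2.3}. If, for example, $f'(z)=1-z$ at $\beta=3/2$ gives $(1-r^2)^2\cdot\tfrac{3}{2}(1-r)^{-2}\to 6>\tfrac52$, then the stated bound fails for the full class. In that case the proposal would record that the argument only establishes $2(\beta-1)\sup\Psi$, and that reaching the stated constant requires the extra coupling above.
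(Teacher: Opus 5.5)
Your central finding is correct, and you should state it outright rather than conditionally. The function $f'(z)=1-z$ lies in $\mathcal{N}(3/2)$: here $1+zf''/f'=1-z/(1-z)$ has real part $<3/2$, and in your notation $\omega(z)=z$, $\phi\equiv 1$. Your identity then gives $S_f(z)=-\tfrac{3}{2}(1-z)^{-2}$, so $(1-r^2)^2|S_f(r)|=\tfrac{3}{2}(1+r)^2\to 6$. The corollary's only hypothesis is membership in $\mathcal{N}(3/2)$, which is all \eqref{Eq-1.3} expresses, so it is false as written. The value $6$ from your endpoint $t=1$ is exactly $\|S_f\|$ for this $f$, and it is also the Kraus bound, since $\mathcal{N}(3/2)\subset\mathcal{S}$. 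So the obstruction is the statement, not any crudeness in Schwarz--Pick.

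The paper's proof (Theorem \ref{Th-2.4} at $\beta=3/2$) follows your route exactly: the same formula $S_f=-2(\beta-1)(\phi'+\beta\phi^2)/(1-z\phi)^2$ and the same Schwarz--Pick step. It then closes by invoking $f''(0)=0$, an assumption announced in the text before Lemma A but absent from both the theorem and the corollary. Since $f''(0)=-2(\beta-1)\phi(0)$, this means $\phi(0)=0$, and Schwarz's lemma (the case $\phi(0)=0$ of Lemma A) gives $|\phi(z)|\le|z|$, i.e.\ $t\le r$. That is precisely the coupling you were looking for, and it is legitimate exactly when $a_2=0$.

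What your proposal still lacks, for the corrected statement ($f\in\mathcal{N}(3/2)$ with $f''(0)=0$), is the final estimate and any treatment of sharpness. The estimate is one line in your notation. Since $(1-rt)^2-(1-r^2)(1-t^2)=(r-t)^2\ge 0$, for $0\le t\le r$ we get
\[
\Psi(r,t)\le 1+\beta\,\frac{(1-r^2)\,t^2}{1-t^2}\le 1+\beta r^2<1+\beta,
\]
so $(1-|z|^2)^2|S_f(z)|<2(\beta^2-1)=5/2$. The paper instead uses the weaker bound $|1-z\phi|^2\ge(1-|z|^2)(1-|\phi|^2)$, via its auxiliary function $(\bar z-\phi)/(1-z\phi)$, which also suffices.

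For sharpness, do not rely on Example \ref{Example-2.1}. There $f'(z)=(1-z^2)^{-(\beta-1)}$, so ${\rm Re}(1+zf''/f')$ is unbounded above and $f\notin\mathcal{N}(\beta)$; moreover its norm is $2(\beta-1)(3-\beta)$, i.e.\ $3/2$ at $\beta=3/2$. The correct extremal is $f'(z)=(1-z^2)^{\beta-1}$, i.e.\ $\phi(z)=z$ and $f''(0)=0$. At $\beta=3/2$ this is $f'(z)=\sqrt{1-z^2}$, with $S_f(z)=-(1+\tfrac{3}{2}z^2)/(1-z^2)^2$ and $(1-r^2)^2|S_f(r)|=1+\tfrac{3}{2}r^2\to\tfrac{5}{2}$.
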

	\begin{proof}[\bf Proof of Theorem \ref{Th-2.4}]
		From \eqref{Eq-2.3}, we have
		\begin{align*}
			\left(\frac{f^{\prime\prime}(z)}{f^{\prime}(z)}\right)=-\frac{2(\beta-1)\phi(z)}{(1-z\phi(z))}.
		\end{align*}
		A simple calculation gives that
		\begin{align*}
			S_f(z)=-2\left(\beta-1\right)\left[\frac{\phi^{\prime}(z)+\beta\phi^2(z)}{(1-z\phi(z))^2}\right].
		\end{align*}
		By using triangle inequality and Schwarz pick lemma, we obtain
		\begin{align}\label{Eq-2.6}
			(1-|z|^2)^2|S_f|&\leq2\left(\beta-1\right)\bigg|\phi^{\prime}(z)+\beta\phi^2(z)\bigg|\frac{(1-|z|^2)^2}{|1-z\phi(z)|^2}\\&\nonumber=\frac{2\left(\beta-1\right)(1-|z|^2)^2}{|1-z\phi(z)|^2}\left(\frac{1-|\phi(z)|^2}{1-|z|^2}+\beta|\phi(z)|^2\right).
		\end{align}
		We define the function $\Psi(z):\mathbb{D}\rightarrow\mathbb{D}$ such that
		\begin{align*}
			\Psi(z):=\frac{\bar{z}-\phi(z)}{1-z\phi(z)}.
		\end{align*}
		Since $\phi(\mathbb{D})\subseteq\mathbb{D}$ then $(1-|z|^2)(1-|z\phi(z)|^2)>0$, it follows that 
		\begin{align*}
			|\bar{z}-\phi(z)|^2<|1-z\phi(z)|^2.
		\end{align*}
		Thus, we can conclude  that $|\Psi(z)|^2<1$. It is easy to see that
		\begin{align*}
			1-|\Psi(z)|^2=\frac{(1-|\phi(z)|^2)(1-|z|^2)}{|1-z\phi(z)|^2}
		\end{align*}
		and
		\begin{align}\label{Eq-2.7}
			\frac{(1-|z|^2)^2}{|1-z\phi(z)|^2}=\frac{(1-|\Psi(z)|^2)(1-|z|^2)}{(1-|\phi(z)|^2)}.
		\end{align}
		If we replace the expression \eqref{Eq-2.7} in \eqref{Eq-2.6}, then we have
		\begin{align}\label{Eq-2.8}
			(1-|z|^2)^2|S_f(z)|\leq2\left(\beta-1\right)(1-|\Psi_1(z)|^2)\left(1+\beta\frac{|\phi(z)|^2(1-|z|^2)}{(1-|\phi(z)|^2)}\right).
		\end{align}
		Since $f^{\prime\prime}(0)=0$ implies that $\phi(0)=0$, using Lemma A, we obtain
		\begin{align}\label{Eq-2.9}
			\frac{|\phi(z)|^2}{1-|\phi(z)|^2}\leq\frac{|z|^2}{1-|z|^2}.
		\end{align}
		Using \eqref{Eq-2.9} in \eqref{Eq-2.8}, we obtain
		\begin{align*}
			(1-|z|^2)^2|S_f(z)|\leq2\left(\beta-1\right)(1-|\Psi(z)|^2)\left(1+\beta|z|^2\right).
		\end{align*}
		Again, since $1-|\Psi(z)|^2$$\leq$1, then
		\begin{align*}
			\sup_{z\in\mathbb{D}}(1-|z|^2)^2|Sh(z)|&\leq2\left(\beta-1\right)\left(1+\beta\right)\\&=2\left(\beta^2-1\right).\nonumber
		\end{align*}
		Thus the desired inequality is obtained. The sharpness follows from the example \ref{Example-2.1}.
		\end{proof}

\begin{exm}\label{Example-2.1}
	The family of parameterized functions defined as:
	\begin{align*}
		f_\beta (z)=\int_{0}^{z}\frac{1}{(1-\xi^2)^{\beta-1}} d\xi,\;\;\;\;\mbox{for}\;\;\beta>1
	\end{align*}
	maximizes the Schwarzian norm defined as:
	\begin{align*}
		||S_f||=\sup_{z\in\mathbb{D}}(1-|z|^2)^2|S_f| 
	\end{align*}
	and from this, the sharpness of the inequality holds for $\beta>0$. Note that
	\begin{align*}
		\frac{f^{\prime\prime}_\beta}{f_\beta^{\prime}}(z)=\frac{2(\beta-1)z}{1-z^2}\;\;\mbox{and}\;\;S_{f_\beta}=\frac{2(\beta-1)}{(1-z^2)^2}\left(1+\beta|z|^2\right)
	\end{align*}
	which calculates
	\begin{align*}
		||S_{f_\beta}||&=\sup_{z\in\mathbb{D}}(1-|z|^2)^2|S_{f_\beta}|\\&=2\left(\beta^2-1\right).
	\end{align*}
\end{exm}
For the value $|f^{\prime\prime}(0)|$ is not necessarily zero, we give a bound for the quantity $(1-|z|^2)^2|S_f(z)|$ with  $f\in\mathcal{N}(\beta)$ in the following.
\begin{thm}\label{Th-2.5}
	For $\beta>1$, let $f\in\mathcal{N}(\beta)$, for all $z\in\mathbb{D}$, then the inequality 
	\begin{align*}
		\gamma=|\phi(0)|=\frac{|f^{\prime\prime}(0)|}{2\left(\beta-1\right)},
	\end{align*}
	then 
	\begin{align*}
		(1-|z|^2)^2|S_f(z)|\leq2\left(\beta-1\right)\left(1+\beta\frac{1+\gamma}{1-\gamma}\right).
	\end{align*}
\end{thm}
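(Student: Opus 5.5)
The plan is to reuse the first half of the proof of Theorem \ref{Th-2.4} verbatim and to change only the step where $\phi(0)=0$ was used. By \eqref{Eq-2.1}, with $\omega(z)=z\phi(z)$, we have
\begin{align*}
\frac{f''(z)}{f'(z)}=-\frac{2(\beta-1)\phi(z)}{1-z\phi(z)},
\end{align*}
where $\phi:\mathbb{D}\to\overline{\mathbb{D}}$ is analytic. Evaluating at $z=0$ gives $f''(0)=-2(\beta-1)\phi(0)$, hence $\gamma=|\phi(0)|=|f''(0)|/(2(\beta-1))$. If $\gamma=1$, then $\phi$ is a unimodular constant and the right-hand side is $+\infty$, so we may assume $\gamma<1$.

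Differentiating the displayed formula gives
\begin{align*}
S_f(z)=-2(\beta-1)\frac{\phi'(z)+\beta\phi^2(z)}{(1-z\phi(z))^2}.
\end{align*}
The Schwarz--Pick lemma, $|\phi'|\le (1-|\phi|^2)/(1-|z|^2)$, then yields \eqref{Eq-2.6}. Introducing $\Psi(z)=(\bar z-\phi(z))/(1-z\phi(z))$ and using the identity \eqref{Eq-2.7} turns this into \eqref{Eq-2.8}:
\begin{align*}
(1-|z|^2)^2|S_f(z)|\le 2(\beta-1)\bigl(1-|\Psi(z)|^2\bigr)\left(1+\beta\frac{|\phi(z)|^2(1-|z|^2)}{1-|\phi(z)|^2}\right).
\end{align*}
None of this uses $\phi(0)=0$.

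The new ingredient replaces \eqref{Eq-2.9} by Lemma A with $|\phi(0)|=\gamma$:
\begin{align*}
\frac{|\phi(z)|^2}{1-|\phi(z)|^2}\le\frac{(\gamma+|z|)^2}{(1-\gamma)^2(1-|z|^2)}.
\end{align*}
I would first verify Lemma A with $\gamma$ in place of $\phi(0)$ in the numerator, via the hyperbolic triangle inequality $d(\phi(z),0)\le d(\phi(z),\phi(0))+d(\phi(0),0)$. Multiplying by $1-|z|^2$ and using $1-|\Psi|^2\le1$ gives
\begin{align*}
(1-|z|^2)^2|S_f(z)|\le 2(\beta-1)\left(1+\beta\frac{(\gamma+|z|)^2}{(1-\gamma)^2}\right).
\end{align*}

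The main obstacle is that this bound is $2(\beta-1)\bigl(1+\beta(1+\gamma)^2/(1-\gamma)^2\bigr)$ as $|z|\to1$, which is weaker than the claimed $\frac{1+\gamma}{1-\gamma}$. To reach the stated form, I would sharpen the hyperbolic estimate. Writing $\rho=|\phi(z)|$, the quantity $\rho^2/(1-\rho^2)$ is controlled through $\rho\le(\gamma+|z|)/(1+\gamma|z|)$ (Schwarz--Pick at the base point $0$), which gives
\begin{align*}
\frac{\rho^2(1-|z|^2)}{1-\rho^2}\le\frac{(\gamma+|z|)^2}{(1-\gamma^2)}\cdot\frac{1}{1}\le\frac{(1+\gamma)^2}{1-\gamma^2}=\frac{1+\gamma}{1-\gamma},
\end{align*}
using $(1+\gamma|z|)^2-(\gamma+|z|)^2=(1-\gamma^2)(1-|z|^2)$. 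Inserting this into \eqref{Eq-2.8} and again using $1-|\Psi|^2\le 1$ gives exactly $2(\beta-1)\bigl(1+\beta\frac{1+\gamma}{1-\gamma}\bigr)$. For $\gamma=0$ this recovers Theorem \ref{Th-2.4}.
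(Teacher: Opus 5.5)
Your argument is correct and is essentially the paper's proof. You reuse \eqref{Eq-2.8} from Theorem~\ref{Th-2.4}, bound $|\phi(z)|^2/(1-|\phi(z)|^2)$ by $(\gamma+|z|)^2/\bigl((1-\gamma^2)(1-|z|^2)\bigr)$, and finish with $1-|\Psi|^2\le 1$ and $(\gamma+|z|)^2\le(1+\gamma)^2$, exactly as the paper does; the paper's proof also uses the sharper $1-\gamma^2$ denominator even though Lemma A is stated with $(1-|\phi(0)|)^2$, so your detour through the weaker form is unnecessary, and your derivation from $|\phi(z)|\le(\gamma+|z|)/(1+\gamma|z|)$ is precisely what justifies that step.
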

We have the following immediate result from Theorem \ref{Th-2.5} for the class $\mathcal{N}(3/2)$.
\begin{cor}
		For $\beta=3/2$, let $f\in\mathcal{N}(3/2)$, for all $z\in\mathbb{D}$, then the inequality 
		\begin{align*}
			\gamma=|\phi(0)|=|f^{\prime\prime}(0)|,
		\end{align*}
		then 
		\begin{align*}
			(1-|z|^2)^2|S_f(z)|\leq\left(1+\frac{3}{2}\frac{1+\gamma}{1-\gamma}\right).
		\end{align*}
\end{cor}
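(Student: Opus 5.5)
The plan is to follow the proof of Theorem \ref{Th-2.4} step by step. The only change is that $\phi(0)$ is no longer assumed to vanish; Lemma A is used in its general form in place of the Schwarz-lemma bound \eqref{Eq-2.9}.

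First, fix the representation. From \eqref{Eq-2.1}, with $\omega(z)=z\phi(z)$, we have
\begin{align*}
\frac{f''(z)}{f'(z)}=-\frac{2(\beta-1)\phi(z)}{1-z\phi(z)},
\end{align*}
where $\phi:\mathbb{D}\to\overline{\mathbb{D}}$ is analytic. Evaluating at $z=0$ gives $f''(0)=-2(\beta-1)\phi(0)$, so $\gamma=|\phi(0)|=|f''(0)|/(2(\beta-1))$ as in the statement. If $\gamma=1$, then $\phi$ is a unimodular constant and the bound is vacuous, so we may assume $\gamma<1$.

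Next, differentiate. Exactly as in the proof of Theorem \ref{Th-2.4},
\begin{align*}
S_f=-2(\beta-1)\,\frac{\phi'(z)+\beta\phi(z)^2}{(1-z\phi(z))^2}.
\end{align*}
The triangle inequality and the Schwarz--Pick estimate $|\phi'(z)|\le (1-|\phi(z)|^2)/(1-|z|^2)$ then give \eqref{Eq-2.6}. Inserting the identity \eqref{Eq-2.7} for $\Psi(z)=(\bar z-\phi(z))/(1-z\phi(z))$ yields \eqref{Eq-2.8}:
\begin{align*}
(1-|z|^2)^2|S_f(z)|\le 2(\beta-1)\bigl(1-|\Psi(z)|^2\bigr)\left(1+\beta\,\frac{|\phi(z)|^2(1-|z|^2)}{1-|\phi(z)|^2}\right).
\end{align*}

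The key step is to bound the last factor. Lemma A (with $|\phi(0)|=\gamma$ in place of $\phi(0)$) gives
\begin{align*}
\frac{|\phi(z)|^2(1-|z|^2)}{1-|\phi(z)|^2}\le \frac{(\gamma+|z|)^2}{(1-\gamma)^2}.
\end{align*}
I would rather work with the sharper, more standard form of the same estimate. It comes from Schwarz--Pick applied to $\phi$ and the point $0$: the pseudo-hyperbolic distance satisfies $\rho(\phi(z),\phi(0))\le |z|$, hence $|\phi(z)|\le (\gamma+|z|)/(1+\gamma|z|)$. From this, $|\phi|^2/(1-|\phi|^2)\le (\gamma+r)^2/\bigl((1-\gamma^2)(1-r^2)\bigr)$ with $r=|z|$. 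Multiplying by $1-r^2$ leaves $(\gamma+r)^2/(1-\gamma^2)$, and letting $r\to1$ bounds this by $(1+\gamma)^2/(1-\gamma^2)=(1+\gamma)/(1-\gamma)$.

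Finally, combine this with $1-|\Psi(z)|^2\le 1$ to get
\begin{align*}
(1-|z|^2)^2|S_f(z)|\le 2(\beta-1)\left(1+\beta\,\frac{1+\gamma}{1-\gamma}\right).
\end{align*}
At $\gamma=0$ this correctly specializes to Theorem \ref{Th-2.4}.

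The main obstacle is the bound on the last factor. Lemma A as printed only gives $(1+\gamma)^2/(1-\gamma)^2$ in the limit $r\to1$, which is weaker than the target $(1+\gamma)/(1-\gamma)$. So I would explicitly derive the refined form from the invariant Schwarz--Pick inequality $|\phi(z)|\le(\gamma+|z|)/(1+\gamma|z|)$ rather than quote Lemma A verbatim. I would also check that $(\gamma+r)^2/(1-\gamma^2)$ is increasing in $r$, so that its supremum over $r\in[0,1)$ is indeed attained as $r\to1$.
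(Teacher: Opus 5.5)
Your proposal is correct and follows the paper's route: the paper proves Theorem \ref{Th-2.5} by inserting $|\phi(z)|^2/(1-|\phi(z)|^2)\le(\gamma+|z|)^2/\bigl((1-\gamma^2)(1-|z|^2)\bigr)$ into \eqref{Eq-2.8} and then using $1-|\Psi(z)|^2\le 1$ and $|z|<1$; this is the $1-\gamma^2$ form you derive from Schwarz--Pick, not the weaker printed Lemma A, and the corollary is the case $\beta=3/2$ of that theorem. State the final specialization explicitly: for $\beta=3/2$ one has $2(\beta-1)=1$, so $\gamma=|f''(0)|$ and the bound becomes $1+\frac{3}{2}\frac{1+\gamma}{1-\gamma}$; also note that for $\gamma<1$ the maximum principle gives $\phi(\mathbb{D})\subset\mathbb{D}$, which \eqref{Eq-2.7} and Schwarz--Pick require.
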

\begin{proof}[\bf Proof of Theorem \ref{Th-2.5}]
	Let $\gamma=|\phi(0)|$. Applying the Lemma A, we  calculate
	\begin{align*}
		\frac{|\phi(z)|^2}{1-|\phi(z)|^2}\leq\frac{(\gamma+|z|)^2}{(1-\gamma^2)(1-|z|^2)}.
	\end{align*}
	If we substitute above inequality in \eqref{Eq-2.6}, we get
	\begin{align*}
		(1-|z|^2)^2|S_f(z)|\leq2\left(\beta-1\right)(1-|\Phi_1(z)|^2)\left(1+\beta\frac{(\gamma+|z|)^2}{(1-\gamma^2)}\right).
	\end{align*}
	From the fact that $|z|<1$ and  $1-|\Phi_1(z)|^2\leq1$, we can easily calculate 
	\begin{align*}
		(1-|z|^2)^2|S_f(z)|\leq2\left(\beta-1\right)\left(1+\beta\frac{1+\gamma}{1-\gamma}\right).
	\end{align*}
	This is the desired bound.
\end{proof}
\section{\bf {Schwarzian derivatives for Harmonic mapping with fixed analytic part}}\label{Sec-3}
In this article, we are concerned with a class of functions $\mathcal{N}(\beta)$, $1<\beta\leq3/2$, defined by
\begin{align*}
	\mathcal{N}(\beta)=\bigg\{f\in\mathcal{A}:\;{\rm Re}\;\bigg(1+\frac{zf^{\prime\prime}(z)}{f^{\prime}(z)}\bigg)<\beta\;\;\mbox{for}\;z\in\mathbb{D}\bigg\}.
\end{align*}
A complex-valued function $f=u+iv$ is harmonic if $u$ and $v$ are real-harmonic in $\mathbb{D}$. Every harmonic function $f$ has the canonical representation $f=h+ \overline{g}$, where $h$ and $g$ are analytic in $\mathbb{D}$ known respectively as the analytic and co-analytic parts of $ f $. A locally univalent harmonic function $f$ is said to be sense-preserving if the Jacobian of $f$, defined by $J_{f}(z):=|h'(z)|^{2}-|g'(z)|^{2}$, is positive in $\mathbb{D}$ and sense-reversing if $J_{f}(z)$ is negative in $\mathbb{D}$. Let $\mathcal{H}$ be the class of all complex-valued harmonic functions $f=h+\overline{g}$ defined in $\mathbb{D}$, where $h$ and $g$ are analytic 
in $\mathbb{D}$ such that $h(0)=h'(0)-1=0$ and $g(0)=0$. For more details, we refer to \cite{Duren-Har-2004}.\vspace{1.2mm}

Since harmonic mappings are natural extension of analytic maps on unit disk $\mathbb{D}$, in this section, we introduce a new subclass $\mathcal{N}_{\mathcal{H}}(\beta)$ for $\beta>0$ of harmonic mappings which is defined by 
\begin{align*}
	\mathcal{N}_{\mathcal{H}}(\beta):=\{f=h+\bar{g} \in\mathcal{H} : h\in \mathcal{N}(\beta)\; \mbox{and}\;g^{\prime}=\omega h^{\prime}\},
\end{align*}
where $\omega : \mathbb{D}\to\mathbb{D}$ is an analytic function.\vspace{2mm}

It is easy to see that $\mathcal{N}(\beta)\subseteq \mathcal{N}_{\mathcal{H}}(\beta)$ in $ \mathbb{D}$ in the sense that harmonic mappings of the form $f=h+\bar{g}$ \cite{Lewy-BAMS-1936} is a generalization of analytic functions $h$. The Schwarzian norm estimate is understood from the article \cite{Ahamed-Allu-Hossain-MM-2025,Chuaqui-Duren-Osgood-JAM-2003} but the Schwarzian norm or pre-Schwarzian norm estimate is not explored yet for a class of harmonic mappings associated with the class $\mathcal{N}(\beta)$. \vspace{2mm}

Inspired by this fact, our main aim is to derive estimates for the modulus of the pre-Schwarzian norm for the functions in $\mathcal{N}_{\mathcal{H}}(\beta)$. In fact, we obtain the following result which will provide a sharp estimate of the pre-Schwarzian norm for functions in the class $\mathcal{N}_{\mathcal{H}}(\beta)$.
\begin{thm}\label{Th-3.1}
	If $f=h+\bar{g}\in\mathcal{N}_{\mathcal{H}}(\beta)$ for $\beta>0$ with $g^{\prime}=\omega h^{\prime}$, where $\omega : \mathbb{D}\to\mathbb{D}$ is an analytic function, then $||P_{f}||\leq 4\beta-3$. The estimate $4\beta-3$ is best possible.
\end{thm}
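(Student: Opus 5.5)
The plan is to split the harmonic pre-Schwarzian into an analytic part, controlled by Theorem \ref{Th-2.1}, and a dilatation part, controlled by the Schwarz--Pick lemma. Throughout I take $\beta>1$, since $h\in\mathcal{N}(\beta)$ is only meaningful there.

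For a sense-preserving harmonic map $f=h+\bar g$ with dilatation $\omega=g'/h'$, I use the standard definition
\begin{align*}
P_f=\frac{h''}{h'}-\frac{\overline{\omega}\,\omega'}{1-|\omega|^2},\qquad \|P_f\|=\sup_{z\in\mathbb{D}}(1-|z|^2)|P_f(z)|.
\end{align*}
The triangle inequality then gives
\begin{align*}
(1-|z|^2)|P_f(z)|\leq (1-|z|^2)\left|\frac{h''(z)}{h'(z)}\right|+\frac{(1-|z|^2)|\omega(z)||\omega'(z)|}{1-|\omega(z)|^2},
\end{align*}
and I bound the two terms separately.

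\emph{First term.} No normalization on $h''(0)$ is assumed, so the $f''(0)=0$ argument behind Theorem \ref{Th-2.3} does not apply. Instead I use part (iii) of Theorem \ref{Th-2.1}:
\begin{align*}
\left|(1-|z|^2)\frac{h''}{h'}+2(\beta-1)\bar z\right|\le 2(\beta-1).
\end{align*}
By the triangle inequality this yields
\begin{align*}
(1-|z|^2)\left|\frac{h''(z)}{h'(z)}\right|\le 2(\beta-1)(1+|z|)\le 4(\beta-1).
\end{align*}

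\emph{Second term.} The Schwarz--Pick lemma gives $|\omega'|(1-|z|^2)\le 1-|\omega|^2$. Hence the second term is at most $|\omega(z)|\le 1$.

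Adding the two bounds gives $\|P_f\|\le 4(\beta-1)+1=4\beta-3$.

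\emph{Sharpness.} I take $h$ with $h'(z)=(1-z)^{2(\beta-1)}$, the extremal function of Theorem \ref{Th-2.1}, and dilatation $\omega(z)=z$, so that $g'=zh'$. Then $\mathrm{Re}(1+zh''/h')<\beta$ holds, and $h''/h'=-2(\beta-1)/(1-z)$. At $z=r\in(0,1)$ both terms of $P_f$ are negative reals, so they add with no cancellation:
\begin{align*}
(1-r^2)|P_f(r)|=2(\beta-1)(1+r)+r\longrightarrow 4\beta-3\quad (r\to1^-).
\end{align*}

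The main obstacle is this sharpness step: the two estimates must be attained simultaneously at the same boundary approach. It works here only because both contributions are real and of the same sign along the positive radius. If the intended class requires $|\omega|<1$ strictly, I would use $\omega(z)=\lambda z$ with $\lambda\to1^-$ and pass to a double limit.
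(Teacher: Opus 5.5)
Your proof is correct and follows the paper's route. The triangle inequality splits $P_f$. The analytic term is bounded by $2(\beta-1)(1+|z|)$: the paper reads this off the subordination as $|h''/h'|\le 2(\beta-1)/(1-|z|)$, and you obtain the same bound from Theorem \ref{Th-2.1}(iii). Schwarz--Pick then bounds the dilatation term by $|\omega|$.

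For sharpness, the paper uses $\omega_t(z)=(z-t)/(1-tz)$ and an optimization in $r$ and $t$, in which the sign of $h_t''/h_t'$ is dropped. Your choice $\omega(z)=z$ makes both terms negative reals on $(0,1)$, so it gives $4\beta-3$ directly as $r\to1^-$. This is a cleaner and error-free verification of the same extremal behaviour.
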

\begin{proof}[\bf Proof of Theorem \ref{Th-3.1}]
	Let $f=h+\bar{g}\in\mathcal{N}_{\mathcal{H}}(\beta)$. Then $h$ holds the subordination relation 
	\begin{align*}
		\frac{zh^{\prime\prime}(z)}{h^{\prime}(z)}\prec\frac{-2(\beta-1)z}{1-z},
	\end{align*}
	which gives us
	\begin{align}\label{Eq-55.22}
		\bigg|\frac{h^{\prime\prime}(z)}{h^{\prime}(z)}\bigg|\leq\frac{2(\beta-1)}{1-|z|}.
	\end{align}
	By the Schwarz-Pick lemmma and \eqref{Eq-55.22}, we have 
	\begin{align*}
		||P_{f}||&=\sup_{z\in\mathbb{D}}\left(1-|z|^2\right)|P_{f}|\\&=\sup_{z\in\mathbb{D}}\left(1-|z|^2\right)\bigg|\frac{h^{\prime\prime}(z)}{h^{\prime}(z)}-\frac{\bar{\omega}(z)\omega^{\prime}(z)}{1-|\omega(z)|^2}\bigg|\\&\leq\sup_{z\in\mathbb{D}}\left(1-|z|^2\right)\left(\bigg|\frac{h^{\prime\prime}(z)}{h^{\prime}(z)}\bigg|+\bigg|\frac{\bar{\omega}(z)\omega^{\prime}(z)}{1-|\omega(z)|^2}\bigg|\right)\\&\leq\sup_{z\in\mathbb{D}}\left(1-|z|^2\right)\left(\frac{2(\beta-1)}{1-|z|}+\frac{|\bar{\omega}(z)|}{1-|z|^2}\right)\\&\leq\sup_{z\in\mathbb{D}}\left(2(\beta-1)(1+|z|)+|\omega(z)|\right)\\&=4(\beta-1)+1\\&=4\beta-3.
	\end{align*}
	We now show that the estimate is best possible. For
	\begin{align*}
		\begin{cases}
			\sqrt{1-\beta}\leq t<1,\;\;\;\mbox{when}\;\;\beta\in[0,1)\vspace{2mm}\\\dfrac{\beta-1}{\beta+1}\leq t<1,\;\;\;\;\;\;\mbox{when}\;\;\beta\in[1,\infty),
		\end{cases}
	\end{align*}
	we consider the function $f_t=h_t+\bar{g_t}\in\mathcal{H}$ with the second complex dilatation $\omega_t(z)=\frac{z-t}{1-tz}$ and $h_t(z)$ be such that
	\begin{align*}
		\frac{zh_t^{\prime\prime}(z)}{h_t^{\prime}(z)}=\frac{-2(\beta-1)z}{1-z}.
	\end{align*}
	Then clearly $f_t\in\mathcal{G}_{\mathcal{H}}(\beta)$ for all t in $[0,\infty)$. A simple computations yields that
	\begin{align*}
		\frac{\overline{\omega_t}(z)\omega^{\prime}_t(z)}{1-|\omega_t(z)|^2}=\frac{\overline{z}-t}{(1-tz)(1-|z|^2)}.
	\end{align*}
	Consequently, we have
	\begin{align*}
		||P_{f_t}||&=\sup_{z\in\mathbb{D}}(1-|z|^2)\bigg|\frac{h^{\prime\prime}_t(z)}{h^{\prime}_t(z)}-\frac{\overline{\omega}(z)\omega^{\prime}(z)}{1-|\omega(z)|^2}\bigg|\\&=\sup_{z\in\mathbb{D}}(1-|z|^2)\bigg|\frac{2(\beta-1)}{1-|z|}-\frac{\bar{z}-t}{(1-tz)(1-|z|^2)}\bigg|.
	\end{align*}
	We set
	\begin{align*}
		M_t=\sup_{z\in\mathbb{D}}(1-|z|^2)\bigg|\frac{2(\beta-1)}{1-|z|}-\frac{\bar{z}-t}{(1-tz)(1-|z|^2)}\bigg|
	\end{align*}
	\begin{align}\label{Eq-5.4}
		\geq \sup_{z\in[0,1)}\bigg|2(\beta-1)(1+r)-\frac{r-t}{1-tr}\bigg|=\sup_{z\in[0,1)}\;\phi(r),
	\end{align}
	where 
	\begin{align*}
		\phi(r)=2(\beta-1)(1+r)-\frac{r-t}{1-tr}.
	\end{align*}
	A simple computation shows that
	\begin{align*}
		\phi^{\prime}(r)=2(b-1)-\frac{t(r-t)}{(1-tr)^2} - \frac{1}{(1-tr)}
	\end{align*}
	and 
	\begin{align*}
		\phi^{\prime\prime}(r)=-\frac{2t(1-t^2)(1-rt)}{(1-rt)^4}<0\;\;\mbox{for all}\;r\in[0,1).
	\end{align*}
	Now, $\phi^{\prime}(r)=0$ gives
	\begin{align*}
		r=r_0:=\frac{2\beta-2-\sqrt{-\beta t^2 +\beta+t^2-1}}{2\beta t-2t}.
	\end{align*}
	Thus the maximum value of $\phi$ is attained at $r_0\in [0, 1)$. Hence, we have
	\begin{align}\label{Eq-5.5}
		M_t=\phi(r_0)=\frac{2\beta t+2\beta-2t-2\sqrt{2}\sqrt{-\beta t^2+\beta+t^2-1}-1}{t}.
	\end{align}
	In view of \eqref{Eq-5.4} and \eqref{Eq-5.5}, it is clear that $M_t\leq||P_{f_t}||\leq 4\beta-3$. We note that $M_t$ is increasing function for t and $M_t\rightarrow4\beta-3$ as $t\rightarrow1$. This shows that estimate is the best possible.
\end{proof}
\section{\bf Estimate of Bloch constant for the class $\mathcal{N}_{\mathcal{H}}(\beta)$}\label{Sec-4}
Let $\mathcal{H}ar(\mathbb{D})$ denote the family of continuous complex-valued functions which are harmonic in the open unit disk $\mathbb{D} = \{z \in \mathbb{C} : |z| < 1\}$, and let $\mathcal{H}ol(\mathbb{D})$ denote the class of holomorphic functions $f$ in $\mathbb{D}$ withthe normalization $f(0) = f'(0) - 1 = 0$ in $\mathbb{D}$. We note that $\mathcal{H}ol(\mathbb{D}) \subset \mathcal{H}ar(\mathbb{D})$. Further, let $\mathcal{S} := \mathcal{S}_{\mathcal{H}ol}$ be the subclass of $\mathcal{H}ol(\mathbb{D})$ which are additionally univalent in $\mathbb{D}$. Clunie and Sheil-Small in \cite{Clunie-Sheil-Small-AASFMSAIM-1984} developed the fundamental theory of functions $f \in \mathcal{H}ar(\mathbb{D})$ with the normalization $f(0) = h'(0) - 1 = 0$ in $\mathbb{D}$. Following Clunie and Sheil-Small's notation, we next denote by $\mathcal{S}_{\mathcal{H}ar}$, the subclass of $\mathcal{H}ar(\mathbb{D})$ consisting of univalent and sense-preserving harmonic mappings $f = h +\overline{g}$ in $\mathbb{D}$, where $h$ and $g$ are normalized such that
\begin{align}\label{Eq-7.1}
	h(z) = z + \sum_{n=2}^{\infty} a_n z^n\;\mbox{and}\;g(z) = \sum_{n=1}^{\infty} b_n z^n.
\end{align}
Here, $h$ and $g$ are called the analytic part and the co-analytic part of $f$, respectively.

The analytic parts of harmonic mappings play a vital role in shaping their geometric properties. For instance, if $f=h+\overline{g}$ is a sense-preserving harmonic mapping and $h$ is convex univalent, then $f \in \mathcal{S}_{\mathcal{H}ar}$ and maps $\mathbb{D}$ onto a close-to-convex domain \cite{Clunie-Sheil-Small-AASFMSAIM-1984}. In \cite{Kanas-Klimek-Smet-BKMS-2014,Kanas-Klimek-Smet-BM-2016}, a class of functions $f=h+\overline{g}\in \mathcal{S}_{\mathcal{H}ar}$ has been studied, where $h$ and $g$ are given by (1.2), such that $b_1=2\beta/3\in (0, 1)$, $h$ is convex in $\mathbb{D}$ (or $h$ is a function with bounded boundary rotation) and the dilatation $\omega$ is of the form
\begin{align*}
	w(z) =(z+2\beta/3)/(1 + 2z\beta/3)
\end{align*}

For $\beta$ ($0 <2\beta/3<1$), let $\mathcal{F}_{\mathcal{H}}(\beta)$ denote the set of all harmonic functions $f=h+\overline{g} \in \text{Har}(\mathbb{D})$, with $g'(0) = b_1 =2\beta/3$ and
\begin{align}\label{Eq-7.2}
	g'(z)=\omega(z)h'(z)\;\mbox{and}\;\omega(z)\in\mathcal{N}(\beta), \;(z \in \mathbb{D})
\end{align}
where $\omega$ is the Möbius selfmap of $\mathbb{D}$ of the form 
\begin{align}\label{Eq-4.3A}
	\omega(z) = \frac{z+2\beta/3}{1+2z\beta/3}.
\end{align}
 The function $w$ has the series expansion
\begin{align}\label{Eq-7.3}
	\omega(z) = 2\beta/3 + a_1 z + a_2 z^2 + \cdots\;\; (z \in \mathbb{D}, a_i \in \mathbb{C}, i = 1, 2, \ldots.)
\end{align}
If $f \in\mathcal{N}_{\mathcal{H}}(\beta)$, then according to the form \eqref{Eq-4.3A}, and the relation $\omega=g'/h'$, we have $a_0=b_1=2\beta/3$,
\begin{align}\label{Eq-7.4}
	\frac{|r -2\beta/3|}{1 -r2\beta/3} \le |\omega(z)| \leq \frac{r +2\beta/3}{1 + 2\beta r/3},
\end{align}
and
\begin{align}
	|a_n| \leq 1 - |a_0|^2 \;\;(n = 1, 2, \ldots),\;\;|\omega'(z)| \le \frac{1 - |\omega(z)|^2}{1 - |z|^2}\;\;(z \in \mathbb{D}).
\end{align}
The classical Bloch theorem asserts the existence of a positive constant $b$ such that for any holomorphic mapping $f$ of the unit disk $\mathbb{D}$, with the normalization $f'(0) = 1$, the image $f(\mathbb{D})$ contains a Schlicht disk of radius $b$. By Schlicht disk, we mean a disk which is the univalent image of some region in $\mathbb{D}$. The Bloch constant is defined as the \textquotedblleft best\textquotedblright{} such constant, that is supremum of such constants $b$. Chen et al.\cite{Chen-Gauthier-Hengartner-PAMS-2000} estimated Bloch constant for harmonic mappings.\vspace{1.2mm}

A function $f \in \mathcal{H}ar(\mathbb{D})$ is called a harmonic Bloch mapping if and only if
\begin{align}
	\mathcal{B}_f= \sup_{z,\omega \in \mathbb{D}, z \ne \omega} \frac{|f(z) - f(\omega)|}{\mathcal{Q}(z,\omega)} < \infty,
\end{align}
where
\begin{align*}
	\mathcal{Q}(z,\omega)=\frac{1}{2} \log \left( \frac{1 + \left| \frac{z-\omega}{1-\overline{z}\omega} \right|}{1 - \left| \frac{z-\omega}{1-\overline{z}\omega} \right|} \right) = \text{arctanh} \left| \frac{z - \omega}{1 -\overline{z}\omega} \right|
\end{align*}
denotes the hyperbolic distance between $z$ and $\omega$ in $\mathbb{D}$, and $\mathcal{B}_f$ called the Bloch’s constantof $f$. In \cite{Colonna-IUMJ-1989} Colonna proved that
\begin{align}\label{Eq-7.7}
	\mathcal{B}_f = \sup_{z \in \mathbb{D}} (1 - |z|^2) \Lambda f
\end{align}
where
\begin{align*}
	\Lambda_f &= \Lambda_f (z) = \max_{0 \leq \theta \le 2\pi} \left| f_z(z) - e^{-2i\theta} {f_{\bar{z}}(z)} \right| = |f_z(z)| + |f_{\bar{z}}(z)|\\&=|h'(z)| + |g'(z)| = |h'(z)|(1 + |\omega(z)|).
\end{align*}
Moreover, the set of all harmonic Bloch mappings forms a complex Banach space with the norm $\|\cdot\|$ given by
\begin{align*}
	||f|| = |f(0)| + \sup_{z\in\mathbb{D}} (1 - |z|^2)\Lambda_f (z).
\end{align*}
This definition agrees with the notion of the Bloch’s constant for analytic functions. Recently, many authors have studied Bloch’s constant for harmonic mappings. (see \cite{Chen-Gauthier-Hengartner-PAMS-2000,Kanas-Klimek-Smet-BM-2016,Liu-SCS-2009}).\vspace{1.2mm}

Inspired by the work in \cite[Proposition 1]{Maharana-Prajapat-Srivastava-PNA-2017} and \cite{Kanas-Maharana-Prajapat-JMAA-2019} for a different class of functions, we prove a similar result for our setting. We then use this result to prove our main theorem.
\begin{lem}\label{Lem-7.1}
	If $f\in\mathcal{N}(\beta)$ of the form $f(z)=z+\sum_{n=2}^{\infty}a_nz^n$, then for $|z|=r<1$, the following statements are
	\begin{enumerate}
		\item[(a)] $\bigg|\dfrac{zf^{\prime\prime}(z)}{f^{\prime}(z)}\bigg|\leq\dfrac{2(\beta-1) r}{1-r}$. The inequality is sharp. The equality is attended for the function
		\begin{align}\label{Eq-7.8}
			g_\beta(z)=\int(1-z)^{2(\beta-1)}dz. 
		\end{align}\vspace{2mm}
		
		\item[(b)] 	$(1-|z|)^{2(\beta-1)}\leq |f^{\prime}(z)|\leq (1+|z|)^{2(\beta-1)}$. The inequality is sharp, with equality achieved by the function given by \eqref{Eq-7.8}.
	\end{enumerate}
\end{lem}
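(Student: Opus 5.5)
The plan is to start from the subordination characterization \eqref{Eq-1.3}, as in the proof of Theorem \ref{Th-2.1}. For $f\in\mathcal{N}(\beta)$ there is a Schwarz function $\omega$ with $\omega(0)=0$ such that
\begin{align*}
	\frac{zf^{\prime\prime}(z)}{f^{\prime}(z)}=-\frac{2(\beta-1)\omega(z)}{1-\omega(z)}.
\end{align*}

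For part (a), the Schwarz lemma gives $|\omega(z)|\le |z|=r$. Since $t\mapsto t/(1-t)$ is increasing on $[0,1)$, we get
\begin{align*}
	\bigg|\frac{zf^{\prime\prime}(z)}{f^{\prime}(z)}\bigg|\le \frac{2(\beta-1)|\omega(z)|}{1-|\omega(z)|}\le\frac{2(\beta-1)r}{1-r}.
\end{align*}
For sharpness, take $\omega(z)=z$. This gives $f^{\prime\prime}/f^{\prime}=-2(\beta-1)/(1-z)$, so $f^{\prime}(z)=(1-z)^{2(\beta-1)}$, which is exactly $g_\beta^{\prime}$ from \eqref{Eq-7.8}. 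At $z=r\in(0,1)$ equality holds.

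For part (b), write $z=re^{i\theta}$ and use
\begin{align*}
	\frac{\partial}{\partial r}\log|f^{\prime}(re^{i\theta})|={\rm Re}\left(e^{i\theta}\frac{f^{\prime\prime}(re^{i\theta})}{f^{\prime}(re^{i\theta})}\right)=\frac{1}{r}{\rm Re}\left(\frac{zf^{\prime\prime}(z)}{f^{\prime}(z)}\right).
\end{align*}
By part (a), the absolute value of this quantity is at most $2(\beta-1)/(1-r)$. Integrating from $0$ to $r$, and using $\log|f^{\prime}(0)|=0$, gives
\begin{align*}
	|\log|f^{\prime}(z)||\le -2(\beta-1)\log(1-r).
\end{align*}
This already yields the lower bound $|f^{\prime}(z)|\ge(1-r)^{2(\beta-1)}$.

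The upper bound is the main obstacle, because the crude estimate from (a) only gives $|f^{\prime}|\le (1-r)^{-2(\beta-1)}$. A sharper one-sided bound on the real part is needed. The image of $\mathbb{D}$ under $w\mapsto -2(\beta-1)w/(1-w)$ is the half-plane ${\rm Re}>-(\beta-1)$. For $|w|\le r$, the image disk has real parts in $[-2(\beta-1)r/(1-r),\,2(\beta-1)r/(1+r)]$. Hence
\begin{align*}
	{\rm Re}\big(zf^{\prime\prime}/f^{\prime}\big)\le 2(\beta-1)r/(1+r).
\end{align*}
Dividing by $r$ and integrating gives $\log|f^{\prime}|\le 2(\beta-1)\log(1+r)$, that is, $|f^{\prime}(z)|\le(1+r)^{2(\beta-1)}$. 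The matching lower bound ${\rm Re}\ge -2(\beta-1)r/(1-r)$ reproduces the lower estimate.

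For sharpness in (b), use $g_\beta^{\prime}(z)=(1-z)^{2(\beta-1)}$. At $z=r$ it attains the lower bound, and at $z=-r$ it attains the upper bound. Checking this membership and the two evaluations is routine.
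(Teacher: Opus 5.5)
Your proof is correct. Part (a) is the paper's argument: subordination to $-2(\beta-1)z/(1-z)$ followed by the Schwarz lemma.

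For part (b) you take a genuinely different route.

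\emph{The paper's route.} It invokes Suffridge's subordination theorem: for convex $h$, $p\prec h$ implies $\int_0^z p(t)/t\,dt\prec\int_0^z h(t)/t\,dt$. Applied with $p=zf''/f'$, this gives $\log f'\prec 2(\beta-1)\log(1-z)$, i.e.\ $f'(z)=(1-\omega(z))^{2(\beta-1)}$ for a Schwarz function $\omega$. Both bounds then follow at once from $1-r\le|1-\omega(z)|\le 1+r$.

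\emph{Your route.} You integrate $\partial_\rho\log|f'(\rho e^{i\theta})|=\rho^{-1}\mathrm{Re}\,(zf''/f')$ along the radius. You use the pointwise, asymmetric range of real parts of the half-plane map on $|w|\le\rho$, namely $\bigl[-2(\beta-1)\rho/(1-\rho),\,2(\beta-1)\rho/(1+\rho)\bigr]$. You also correctly observe that the symmetric modulus bound from (a) only yields $|f'|\le(1-r)^{-2(\beta-1)}$, so this one-sided refinement is exactly what the upper estimate needs.

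\emph{Comparison.} Your argument is self-contained: it uses only the Schwarz lemma and an explicit disk-image computation, the same radial-integration device the paper uses in the proof of Theorem~\ref{Th-2.2}. The paper's argument is shorter once the external theorem is granted. It also yields the stronger structural statement $f'\prec g_\beta'$, which controls $\arg f'$ as well as $|f'|$ (for instance $|\arg f'(z)|\le 2(\beta-1)\arcsin r$). Your method gives only the modulus bounds, which is all the lemma asks for.

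One small slip: the image of $\mathbb{D}$ under $w\mapsto-2(\beta-1)w/(1-w)$ is the half-plane $\mathrm{Re}\,w<\beta-1$, not $\mathrm{Re}\,w>-(\beta-1)$. This is harmless, because the interval you actually use for $|w|\le r$ is computed correctly and is consistent with the correct half-plane.
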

\begin{proof}[\bf Proof of Lemma \ref{Lem-7.1}]
	From the definition of $f\in\mathcal{N}(\beta)$, we have
	\begin{align*}
		{\rm Re  }\left(1+\frac{zf^{\prime\prime}(z)}{f^{\prime}(z)}\right)\prec1-\frac{2(\beta-1) z}{1-z}
	\end{align*}
	which implies that
	\begin{align*}
		{\rm Re  }\left(\frac{zf^{\prime\prime}(z)}{f^{\prime}(z)}\right)\prec-\frac{2(\beta-1) z}{1-z}=h(z).
	\end{align*}
	The result $(a)$ follows easily. To show the sharpness note that 
	\begin{align*}
		\frac{zg^{\prime\prime}(z)}{g^{\prime}(z)}=-\frac{2(\beta-1) z}{1-z}.
	\end{align*}
	This equality, when $z=r, 0\leq r<1.$ To prove the result $(b)$ from the definition of the function $f\in\mathcal{N}(\beta)$ and a well known subordination result in \cite{Suffridge-DMJ-1970}
	\begin{align*}
		f^{\prime}(z)&\prec\exp\left(\int_{0}^{z}\frac{h(t)}{t}dt\right)\\&=\exp\left(\int_{0}^{z}\frac{-2(\beta-1)}{1-t}dt\right)=(1-t)^{2(\beta-1)}.
	\end{align*}
	One can easily show the sharpness in $(b)$.
\end{proof}
This section focuses on finding bounds for the Bloch constant within the co-analytic part of functions in the class $\mathcal{N}_{\mathcal{H}}(\beta)$.
\begin{thm}\label{Th-7.1}
	Let $\beta\in(1,3/2]$ and let$f\in\mathcal{N}_{\mathcal{H}}(\beta)$. The bloch constant $\mathcal{B}_f$ is bounded by 
	\begin{align*}
		\mathcal{B}_f\leq\frac{(3+2\beta)(1-r_0^2)(1+r_0)^{2\beta-1}}{3+2\beta r_0}
	\end{align*}
	where $r_0$ is the unique root of the equation $(4\beta-3) + (4\beta^2 - 4\beta - 6)r + (-8\beta - 3)r^2 - 4\beta^2 r^3=0$ in the interval $(0,1)$.
\end{thm}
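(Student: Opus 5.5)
The bound will come from Colonna's formula \eqref{Eq-7.7}: $\mathcal{B}_f=\sup_{z\in\mathbb{D}}(1-|z|^2)|h'(z)|\,(1+|\omega(z)|)$. I will bound the two factors $|h'(z)|$ and $1+|\omega(z)|$ separately in terms of $r=|z|$. This reduces the problem to maximizing a one-variable function on $[0,1)$.

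First, for the analytic part $h\in\mathcal{N}(\beta)$, Lemma \ref{Lem-7.1}(b) gives $|h'(z)|\le (1+r)^{2(\beta-1)}$. Second, the dilatation is the Möbius map \eqref{Eq-4.3A} with $\omega(0)=2\beta/3$, so the right-hand inequality in \eqref{Eq-7.4} gives
\begin{align*}
1+|\omega(z)|\le 1+\frac{r+2\beta/3}{1+2\beta r/3}=\frac{(3+2\beta)(1+r)}{3+2\beta r}.
\end{align*}
Multiplying these bounds by $1-r^2$, I obtain $(1-|z|^2)\Lambda_f(z)\le F(r)$, where
\begin{align*}
F(r):=\frac{(3+2\beta)(1-r^2)(1+r)^{2\beta-1}}{3+2\beta r}.
\end{align*}
Hence $\mathcal{B}_f\le\sup_{0\le r<1}F(r)$.

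The remaining step is to show that this supremum is attained at a unique interior critical point $r_0$. I will compute the logarithmic derivative
\begin{align*}
\frac{F'(r)}{F(r)}=\frac{2\beta-1}{1+r}-\frac{2r}{1-r^2}-\frac{2\beta}{3+2\beta r},
\end{align*}
and clear denominators by multiplying through by $(1-r^2)(3+2\beta r)$. This should produce a low-degree polynomial $p(r)$, which I will check agrees, up to a constant factor, with the cubic $(4\beta-3)+(4\beta^2-4\beta-6)r-(8\beta+3)r^2-4\beta^2r^3$ in the statement. To get existence and uniqueness of the root in $(0,1)$, I will use three facts:
\begin{itemize}
\item[(a)] $p(0)=4\beta-3>0$, since $\beta>1$.
\item[(b)] $p(1)=-12\beta-12<0$, so $p$ has a root in $(0,1)$ by the intermediate value theorem.
\item[(c)] Uniqueness follows from Descartes' rule of signs: the coefficient signs are $+$, then $4\beta^2-4\beta-6<0$ for $\beta\in(1,3/2]$, then $-$, $-$, giving exactly one sign change.
\end{itemize}
It follows that $F$ increases on $(0,r_0)$ and decreases on $(r_0,1)$. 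Since $F(r)\to0$ as $r\to1^-$, the supremum equals $F(r_0)$, which is the claimed bound.

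The main obstacle is the algebra in matching the cleared-denominator expression with the stated cubic. Because of the factor $(1+r)$ in the denominator, one factor of $(1+r)$ may cancel, and the stated polynomial may differ by such a factor or by normalization. If the coefficients do not match exactly, I will keep the critical-point equation in whatever form the computation gives. The sign argument (a)–(c) still works for that form, so the structure of the proof, and the bound $\mathcal{B}_f\le F(r_0)$, does not depend on the exact presentation of the cubic.
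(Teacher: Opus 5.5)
Your proposal is correct and matches the paper's proof step for step: Colonna's formula \eqref{Eq-7.7}, Lemma \ref{Lem-7.1}(b) for $|h'|$, the bound \eqref{Eq-7.4} for $|\omega|$ (which, as in the paper, relies on $\omega(0)=2\beta/3$), and then a one-variable maximization; the only difference is that the paper proves uniqueness of the root via $\eta'<0$ rather than Descartes' rule. Your suspicion about the algebra is right: clearing denominators gives the quadratic $(4\beta-3)+(4\beta^2-8\beta-3)r-4\beta^2r^2$, and the stated cubic is exactly $(1+r)$ times it, so the roots in $(0,1)$ coincide; also, the cubic gives $p(1)=-8\beta-12$, not $-12\beta-12$, though only the sign matters.
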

\begin{proof}[\bf Proof of Theorem \ref{Th-7.1}]
	Let $f=h+\overline{g}\in\mathcal{N}_{\mathcal{H}}(\beta)$ and  $h\in\mathcal{N}(\beta)$. Using Lemma \ref{Lem-7.1} along with \eqref{Eq-7.7} and \eqref{Eq-7.4}, we obtain
	\begin{align*}
		\mathcal{B}_f&=\sup_{z\in\mathbb{D}}(1-|z|^2)|h^{\prime}(z)|(1+|\omega(z)|)\\&\leq \sup_{z\in\mathbb{D}}(1-r^2)(1+r)^{2(\beta-1)}\left(1+\frac{r+2\beta/3}{1+2\beta r/3}\right)=(3+2\beta)\sup_{0\leq r<1}\xi(r),
	\end{align*}
	where
	\begin{align*}
		\xi(r):=\frac{(1-r^2)(1+r)^{2\beta-1}}{3+2\beta r}.
	\end{align*}
	The derivative of $\xi(r)$ is equal to zero if $\eta(r) = 0$ for $r \in (0, 1)$, where
	\begin{align*}
		\eta(r)=(4\beta-3) + (4\beta^2 - 4\beta - 6)r + (-8\beta - 3)r^2 - 4\beta^2 r^3.
	\end{align*}
	We note that $\eta(0) = (4\beta-3)> 0$ holds for the value $\beta>3/4$, and $\eta(1) = -8\beta - 12 < 0$, so that there exists a root $r_0 \in (0, 1)$ such that $\eta(r_0) = 0$. Now it suffices to prove that $r_0$ is unique. It is enough to prove that the derivative $\eta'(r) < 0$ for $r \in (0, 1)$ and $\alpha \in (0, 1)$. This holds by virtue of the inequality $\eta'(r) = (4\beta^2 - 4\beta - 6) + (-16\beta - 6)r - 12\beta^2 r^2<0$ always. Hence
	\begin{align*}
		\sup_{0\leq r<1}\xi(r)=\frac{(1-r_0^2)(1+r_0)^{2\beta-1}}{3+2\beta r_0},
	\end{align*}
	where $r_0$ is unique root of $\eta(r) = 0$ for $r \in (0, 1)$. This proves the result.
\end{proof}
\section{\bf Radius problems for the class $\mathcal{N}(\beta)$}\label{Sec-5}
Determining the radius of convexity and the radius of concavity for a given class of functions, and demonstrating the sharpness of these radii, is an important aspect of Geometric Function Theory.\vspace{1.2mm}

It is natural to study the following problems.
\begin{prob}\label{Pro-1}
	Can we determine the radius of convexity for the class $\mathcal{N}(\beta)$?
\end{prob}
\begin{prob}\label{Pro-2}
	Can we determine the radius of concavity for the class $\mathcal{N}(\beta)$?
\end{prob}
We investigate the radius of concavity and convexity for a certain class of functions, providing affirmative answers to Problems \ref{Pro-1} and \ref{Pro-2}.
In this section, we find a lower bound of the radius of concavity $ R_{\rm Co(p)} $ of  the class $ \mathcal{S}(p) $. Now, we consider functions $f$ in $ \mathcal{A} $ that map $ \mathbb{D} $ conformally onto a domain whose complement with respect to $ \mathbb{C} $ is convex and that satisfy the normalization $ f(1)=\infty $. We will denote these families of functions by $ {\rm Co}(A) $. Now $ f\in {\rm Co}(A) $ if, and only if, $ {T_f(z)}>0 $ for every $ z\in\mathbb{D} $, where $ f(0)=f^{\prime}(0)-1 $ and 
\begin{align}\label{Eq-4.1A}
	T_f(z)=\frac{2}{A-1}\left(\frac{(A+1)}{2}\left(\frac{1+z}{1-z}\right)-1-z\frac{f^{\prime\prime}(z)}{f^{\prime}(z)}\right),
\end{align}
where $A\in (1, 2]$.\vspace{2mm}

Inspired by \cite[Definition 1.1.]{Bhowmik-CMFT-2024} and \cite{Ahamed-Hossain-UJ-2026}, for an arbitrary class $\mathcal{F}$ of functions, we define the radius of concavity.
\begin{defi}
	The radius of concavity (w.r.t $ \mathcal{F} $), a subclass of $ \mathcal{A} $ is the largest number $ \mathrm{R}_{\mathcal{F}}\in (0,1] $  such that for each function $ f\in\mathcal{F} $, $ {\rm Re} \left(T_f(z)\right)>0 $ for all $ |z|< \mathrm{R}_{\mathcal{F}}$, where $ T_{f}(z) $ is defined in \eqref{Eq-1.3}.
\end{defi}
\begin{thm}\label{Th-5.1}
	Let $A \in (1,2]$ and $\beta <{(A+1)}/{4}$. If $f \in \mathcal{N}(\beta)$, then $\mathrm{Re}\,(T_f(z)) > 0$ for $|z| < R_{\beta,\mathrm{Co}(A)}$, where $R_{\beta,\mathrm{Co}(A)}$ is the unique root of the polynomial $\Phi_1(r) = 0$ in the interval $(0,1)$, given by
	\begin{equation*}
		\Phi_1(r) = (A + 4\beta - 3)r^2 - 2(A + 2\beta - 1)r + A + 1 - 4\beta.
	\end{equation*}
	The radius $R_{\beta,\mathrm{Co}(A)}$ is best possible. The extremal function is 
	\begin{align*}
		f_\beta(z) = \int_0^z (1-t)^{2(\beta-1)}\,dt=z + \sum_{k=2}^{\infty} \frac{(2 - 2\beta)_{k-1}}{k!} z^k.
	\end{align*}
\end{thm}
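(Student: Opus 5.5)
The plan is to bound $\mathrm{Re}\,T_f(z)$ from below on the circle $|z|=r$ by a rational function of $r$ alone, clear denominators, and identify the numerator with $\Phi_1(r)$ up to a positive factor.

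\emph{Reduction.} Since $2/(A-1)>0$, the condition $\mathrm{Re}\,T_f(z)>0$ is equivalent to
\begin{align*}
\frac{A+1}{2}\,\mathrm{Re}\left(\frac{1+z}{1-z}\right)-1-\mathrm{Re}\left(\frac{zf''(z)}{f'(z)}\right)>0 .
\end{align*}

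\emph{Step 1: the two lower bounds.} For the first term, the image of $|z|=r$ under the half-plane map $(1+z)/(1-z)$ is a circle symmetric about the real axis. This gives
\begin{align*}
\mathrm{Re}\,\frac{1+z}{1-z}\ \ge\ \frac{1-r}{1+r}.
\end{align*}
For the $f$-dependent term I use \eqref{Eq-2.1}: $zf''/f'=-2(\beta-1)\omega/(1-\omega)$ with $|\omega(z)|\le r$ by the Schwarz lemma. Here there are two options.
\begin{itemize}
\item Use Lemma \ref{Lem-7.1}(a), $|zf''/f'|\le 2(\beta-1)r/(1-r)$.
\item Use the sharper real-part estimate obtained from $\mathrm{Re}\,(1/(1-\omega))\ge 1/(1+r)$, namely $\mathrm{Re}(zf''/f')\le 2(\beta-1)r/(1+r)$.
\end{itemize}
I would carry out both and keep whichever reduces to the stated $\Phi_1$.

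\emph{Step 2: clearing denominators.} Substituting the bounds and multiplying by $2(1-r^2)>0$ turns the sufficient condition into the positivity of a polynomial of degree at most two in $r$. I would expand this polynomial and match it with $\Phi_1(r)=(A+4\beta-3)r^2-2(A+2\beta-1)r+A+1-4\beta$, keeping track of exactly how the constant term $-1$ in $T_f$ and the factor $(1\pm r)$ enter.

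\emph{Main obstacle.} This bookkeeping is the step I expect to be hardest. A quick expansion with the Lemma \ref{Lem-7.1}(a) bound gives constant term $A-1$ rather than $A+1-4\beta$, so I will first check which normalization of the bound actually produces the stated coefficients. A plausible source of the $-4\beta$ is combining $\mathrm{Re}(1+zf''/f')<\beta$ with the distortion in $r$.

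\emph{Step 3: uniqueness of the root.} The hypothesis $\beta<(A+1)/4$ gives $\Phi_1(0)=A+1-4\beta>0$, while $\Phi_1(1)=-4\beta<0$. Hence $\Phi_1$ has a root in $(0,1)$. It is unique because a quadratic that changes sign on an interval has exactly one root there when the other root lies outside. Since $A+4\beta-3>0$, the leading coefficient is positive, so the other root exceeds $1$. Consequently $\Phi_1(r)>0$ for $r<R_{\beta,\mathrm{Co}(A)}$, and therefore $\mathrm{Re}\,T_f>0$ on $|z|<R_{\beta,\mathrm{Co}(A)}$.

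\emph{Step 4: sharpness.} I would take $f_\beta$ with $zf_\beta''/f_\beta'=-2(\beta-1)z/(1-z)$ and evaluate $T_{f_\beta}$ at the real point $z=\pm r$ where the inequalities of Step 1 become equalities simultaneously. I would then check that $\mathrm{Re}\,T_{f_\beta}$ equals a positive multiple of $\Phi_1(r)/(1-r^2)$, which vanishes at $r=R_{\beta,\mathrm{Co}(A)}$.

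The delicate point is that the two lower bounds in Step 1 are attained at different points: the first at $z=-r$, the second at $z=r$ or $z=-r$ depending on which estimate is used. So I expect sharpness to force the choice of the real-part estimate, and hence determine the correct form of $\Phi_1$.
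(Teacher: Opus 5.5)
The gap is in Step 2, and it carries over to Step 4. You never actually obtain $\Phi_1$, and no choice of normalization will produce it, because $\Phi_1$ is not what the estimates give. The real-part route you lean towards at the end already settles the problem. With $\mathrm{Re}\frac{1+z}{1-z}\ge\frac{1-r}{1+r}$ and $\mathrm{Re}(zf''/f')\le 2(\beta-1)r/(1+r)$ one gets
\[
\mathrm{Re}\,T_f(z)\ \ge\ \frac{2}{A-1}\Bigl(\frac{A+1}{2}\cdot\frac{1-r}{1+r}-1-\frac{2(\beta-1)r}{1+r}\Bigr)=\frac{(A-1)-(A+4\beta-1)r}{(A-1)(1+r)}.
\]
For $f_\beta$ both estimates are equalities at $z=-r$, since there $zf_\beta''/f_\beta'=2(\beta-1)r/(1+r)$. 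So for $\beta>1$ the sharp radius is $(A-1)/(A+4\beta-1)$. Over the denominator $(A-1)(1-r^2)$ the numerator is $(1-r)\bigl[(A-1)-(A+4\beta-1)r\bigr]=(A+4\beta-1)r^2-2(A+2\beta-1)r+(A-1)$. This is not $\Phi_1$; already the quadratic coefficients differ. The modulus bound of Lemma \ref{Lem-7.1}(a) gives $(A+7-4\beta)r^2-2(A+2\beta-1)r+(A-1)$, which is again not $\Phi_1$. In particular, the identity you plan to check in Step 4, $T_{f_\beta}(-r)=\Phi_1(r)/((A-1)(1-r^2))$, is false. At $r=0$ the numerators are $A-1$ and $A+1-4\beta$, and at $r=1$ they are $0$ and $-4\beta$.

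Step 3 has a second problem: its premise $\Phi_1(0)>0$ comes from a hypothesis that is incompatible with the class. Since $1+zf''(z)/f'(z)=1$ at $z=0$, $\mathcal{N}(\beta)$ is empty unless $\beta>1$, whereas the hypothesis gives $\beta<(A+1)/4\le 3/4$. Under the stated hypothesis the first assertion therefore holds only vacuously, and the sharpness claim fails because $f_\beta\notin\mathcal{N}(\beta)$. For $\beta>1$ one has $\Phi_1(0)=A+1-4\beta<0$, $\Phi_1(1)=-4\beta<0$ and $A+4\beta-3>0$, so $\Phi_1<0$ on $[0,1]$ and $R_{\beta,\mathrm{Co}(A)}$ does not exist. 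Your quadratic-root argument for uniqueness is fine in itself, and more complete than the paper's bare appeal to the intermediate value theorem, but it only operates in this empty regime.

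The paper's proof supplies no missing trick. It gets the same disk from Theorem \ref{Th-2.1}(iii). It then inserts the \emph{lower} bound $\mathrm{Re}(1+zf''/f')\ge 1-2(\beta-1)r/(1-r)$ into $T_f$, where an upper bound is needed. It also mis-expands the resulting bracket, which equals $(A+4\beta-1)r^2+(4\beta-2A-6)r+(A-1)$, not $\Phi_1$, and asserts the same false identity for $T_{f_\beta}(-r)$. Your remark about the constant term $A-1$ was the right signal. The correct conclusion is that the statement itself must be repaired, to $\beta>1$ with sharp radius $(A-1)/(A+4\beta-1)$, not that some other normalization recovers $\Phi_1$.
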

\begin{rem}
	The power series expansion of
	\begin{align*}
		f_\beta(z) = \int_0^z (1-t)^{2(\beta-1)}\,dt,
	\end{align*}we first expand the integrand using the general binomial series for $\vert{}t\vert{} < 1$ as 
	\begin{align*}
		(1-t)^{\alpha} = \sum_{n=0}^{\infty} \binom{\alpha}{n} (-1)^n t^n = \sum_{n=0}^{\infty} \frac{(-1)^n \alpha(\alpha-1)\cdots(\alpha-n+1)}{n!} t^n.
	\end{align*}Setting $\alpha = 2(\beta-1)$, the binomial coefficient simplifies via the Pochhammer symbol (or rising factorial) $(a)_n$, 
	\begin{align*}
		\binom{2(\beta-1)}{n} (-1)^n = \frac{(-1)^n}{n!} \prod_{k=0}^{n-1} (2\beta - 2 - k) = \frac{(2 - 2\beta)_n}{n!}.
	\end{align*}
	In terms of the Gauss hypergeometric function ${_2F_1}$, the function is expressed directly as 
	\begin{align*}
		f_\beta(z) = z \, {_2F_1}\left(1, \, 2 - 2\beta; \, 2; \, z\right).
	\end{align*}
\end{rem}
\begin{proof}[\bf Proof of Theorem \ref{Th-5.1}]
	By Theorem \ref{Th-2.1}(iii), any $f \in \mathcal{N}(\beta)$ satisfies
	\begin{equation}\label{eq-pre-sch-disk}
		\left| (1 - |z|^2) \frac{f''(z)}{f'(z)} + 2(\beta - 1)\bar{z} \right| \leq 2(\beta - 1).
	\end{equation}
	Dividing inequality \eqref{eq-pre-sch-disk} by $1 - |z|^2$ gives
	\begin{equation*}
		\left| \frac{f''(z)}{f'(z)} + \frac{2(\beta - 1)\bar{z}}{1 - |z|^2} \right| \leq \frac{2(\beta - 1)}{1 - |z|^2}.
	\end{equation*}
	Multiplying by $|z| = r$, we obtain
	\begin{equation*}
		\left| \frac{z f''(z)}{f'(z)} + \frac{2(\beta - 1)r^2}{1 - r^2} \right| \leq \frac{2(\beta - 1)r}{1 - r^2},
	\end{equation*}
	which leads directly to the sharp lower bound
	\begin{equation*}
		\mathrm{Re}\left( 1 + \frac{z f''(z)}{f'(z)} \right) \geq 1 - \frac{2(\beta - 1)r^2}{1 - r^2} - \frac{2(\beta - 1)r}{1 - r^2} = 1 - \frac{2(\beta - 1)r}{1 - r}.
	\end{equation*}
	Recall that the concavity operator $T_f(z)$ is defined by
	\begin{equation*}
		T_f(z) = \frac{2}{A-1} \left( \frac{A+1}{2} \frac{1+z}{1-z} - \left(1 + \frac{z f''(z)}{f'(z)}\right) \right).
	\end{equation*}
	Taking the real part for $z = r e^{i\theta}$ and applying 
	\begin{align*}
		\mathrm{Re}\left(\frac{1+z}{1-z}\right) \geq \frac{1-r}{1+r},
	\end{align*} we obtain
	\begin{align*}
		\mathrm{Re}\,(T_f(z)) &\geq \frac{2}{A-1} \left( \frac{A+1}{2} \left(\frac{1-r}{1+r}\right) - 1 + \frac{2(\beta - 1)r}{1 - r} \right) \\
		&= \frac{1}{(A-1)(1-r^2)} \left[ (A+1)(1-r)^2 - 2(1-r^2) + 4(\beta-1)r(1+r) \right] \\
		&= \frac{1}{(A-1)(1-r^2)} \left[ (A + 4\beta - 3)r^2 - 2(A + 2\beta - 1)r + A + 1 - 4\beta \right] \\
		&:= \frac{1}{(A-1)(1-r^2)} \Phi_1(r).
	\end{align*}
	Observe that $\Phi_1(r)$ is continuous on $[0,1]$ with
	\begin{equation*}
		\Phi_1(0) = A + 1 - 4\beta > 0 \quad \text{for } \beta < \frac{A+1}{4},
	\end{equation*}
	and
	\begin{equation*}
		\Phi_1(1) = (A + 4\beta - 3) - 2(A + 2\beta - 1) + A + 1 - 4\beta = -4\beta < 0.
	\end{equation*}
	By the \textit{Intermediate Value Theorem}, $\Phi_1(r) = 0$ has a unique root $R_{\beta,\mathrm{Co}(A)} \in (0,1)$. Thus, $\mathrm{Re}\,(T_f(z)) > 0$ for $|z| < R_{\beta,\mathrm{Co}(A)}$.\vspace{1.2mm}
	
	To establish sharpness, consider the function $f_\beta \in \mathcal{N}(\beta)$ defined by $f_\beta'(z) = (1-z)^{2(\beta-1)}$. A direct computation gives that
	\begin{equation*}
		1 + \frac{z f_\beta''(z)}{f_\beta'(z)} = 1 - \frac{2(\beta-1)z}{1-z}.
	\end{equation*}
	Choosing $z = -r \in (-1, 0)$, we find
	\begin{equation*}
		T_{f_\beta}(-r) = \frac{2}{A-1} \left( \frac{A+1}{2} \frac{1-r}{1+r} - 1 - \frac{2(\beta-1)r}{1+r} \right) = \frac{\Phi_1(r)}{(A-1)(1-r^2)}.
	\end{equation*}
	For $r > R_{\beta,\mathrm{Co}(A)}$, we have $\Phi_1(r) < 0$, which implies $\mathrm{Re}\,(T_{f_\beta}(-r)) < 0$. This completes the proof.
\end{proof}
We will first define the definition of radius of convexity and key conditions for convexity.
\begin{defi}
	The number $r\in [0, 1]$ is called the radius of convexity of a particular subclass $\mathcal{N}(\beta)$ of the class $\mathcal{A}$ of normalized analytic functions (where $f(z) = z + \sum_{n=2}^{\infty} a_n z^n$) in the unit disk $\mathbb{D}$ if $r$ is the largest number such that the function $f$ is convex in the disk $|z|<r$.
\end{defi}
A function $f$ analytic in a region $\Omega$ is convex in $\Omega$ if it maps $\Omega$ onto a convex region. For an analytic function $f$ in the unit disk $\mathbb{D}$, the condition for $f$ to be locally univalent and convex in a disk $|z| <r$ is given by the inequality you provided:
\begin{align}\label{Eq-5.2}
	{\rm Re}\left(1+\frac{zf^{\prime\prime}(z)} {f^{\prime}(z)}\right) > 0, \quad \text{for } |z| <r.
\end{align}
\begin{thm}\label{Th-5.2}
	Let $\beta > 1$. The radius of convexity $R_{\mathrm{c}}(\mathcal{N}(\beta))$ for the class  $\mathcal{N}(\beta)$ is 
	\begin{equation*}
		R_{\mathrm{c}}(\mathcal{N}(\beta)) = \frac{1}{2\beta - 1}.
	\end{equation*}
	The radius $R_{\mathrm{c}}(\mathcal{N}(\beta))$ is sharp.
\end{thm}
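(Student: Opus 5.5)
Since convexity of $f$ on $|z|<r$ is governed by \eqref{Eq-5.2}, the plan is to find a sharp lower bound for ${\rm Re}\left(1+zf''(z)/f'(z)\right)$ on $|z|=r$ over the class $\mathcal{N}(\beta)$ and then find where it stops being positive. This lower bound was in effect already obtained in the proof of Theorem \ref{Th-5.1}. Starting from Theorem \ref{Th-2.1}(iii), dividing by $1-|z|^2$ and multiplying by $|z|=r$, we get that $zf''(z)/f'(z)$ lies in the closed disk with centre $-2(\beta-1)r^2/(1-r^2)$ and radius $2(\beta-1)r/(1-r^2)$. Hence
\begin{align*}
{\rm Re}\left(1+\frac{zf''(z)}{f'(z)}\right)\geq 1-\frac{2(\beta-1)r}{1-r}=\frac{1-(2\beta-1)r}{1-r}.
\end{align*}
The same bound can also be obtained directly from the subordination \eqref{Eq-1.3}. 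The image of $|z|\le r$ under $z\mapsto -2(\beta-1)z/(1-z)$ is a disk symmetric about the real axis, and its leftmost point is $-2(\beta-1)r/(1-r)$, attained at $z=r$. I would cite either route, preferably the first, since it reuses the earlier computation.

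The right-hand side is positive exactly when $r<1/(2\beta-1)$. This number lies in $(0,1)$ because $\beta>1$. By \eqref{Eq-5.2} and the minimum principle for harmonic functions, every $f\in\mathcal{N}(\beta)$ is therefore convex in $|z|<1/(2\beta-1)$. Local univalence there is automatic, since $f'\neq0$ throughout $\mathbb{D}$ by the subordination representation. So $R_{\mathrm c}(\mathcal{N}(\beta))\ge 1/(2\beta-1)$.

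For sharpness I would use the extremal function $f_\beta'(z)=(1-z)^{2(\beta-1)}$ from Lemma \ref{Lem-7.1} and Theorem \ref{Th-5.1}. It lies in $\mathcal{N}(\beta)$ because
\begin{align*}
1+\frac{zf_\beta''(z)}{f_\beta'(z)}=1-\frac{2(\beta-1)z}{1-z}=\frac{1-(2\beta-1)z}{1-z},
\end{align*}
and ${\rm Re}\left(z/(1-z)\right)>-1/2$ gives real part $<\beta$. At $z=r$ this quantity equals $(1-(2\beta-1)r)/(1-r)$, which is zero at $r=1/(2\beta-1)$ and negative for $r>1/(2\beta-1)$. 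Consequently the convexity condition fails in every larger disk, and $R_{\mathrm c}(\mathcal{N}(\beta))=1/(2\beta-1)$.

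The main point needing care is the passage from Theorem \ref{Th-2.1}(iii) to the pointwise lower bound: the disk's leftmost point must be computed correctly, since $-\frac{2(\beta-1)(r^2+r)}{1-r^2}=-\frac{2(\beta-1)r}{1-r}$. One also has to confirm that ${\rm Re}\left(1+zf''/f'\right)>0$ suffices for convexity on the disk $|z|<r$ itself, not just on its boundary circle. Both are routine once stated.
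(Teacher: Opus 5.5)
Your proposal is correct and follows essentially the same route as the paper. You derive the bound ${\rm Re}\left(1+zf''(z)/f'(z)\right)\ge (1-(2\beta-1)r)/(1-r)$ from Theorem~\ref{Th-2.1}(iii) by dividing by $1-|z|^2$ and multiplying by $|z|$, and you prove sharpness with $f_\beta'(z)=(1-z)^{2(\beta-1)}$ at $z=r$. Your check that $f_\beta\in\mathcal{N}(\beta)$ via ${\rm Re}\left(z/(1-z)\right)>-1/2$ fills in a step the paper only asserts.
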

\begin{proof}[\bf Proof of Theorem \ref{Th-5.2}]
	Let $f \in \mathcal{N}(\beta)$ with $\beta > 1$. By Theorem 2.1(iii), we have
	\begin{equation*}
		\left| (1 - |z|^2) \frac{f''(z)}{f'(z)} + 2(\beta - 1)\bar{z} \right| \leq 2(\beta - 1).
	\end{equation*}
	This yields the sharp inequality
	\begin{equation*}
		\left| \frac{z f''(z)}{f'(z)} + \frac{2(\beta - 1)|z|^2}{1 - |z|^2} \right| \leq \frac{2(\beta - 1)|z|}{1 - |z|^2}.
	\end{equation*}
	For $|z| = r < 1$, taking the real part gives
	\begin{align*}
		\mathrm{Re}\left( 1 + \frac{z f''(z)}{f'(z)} \right) &\geq 1 - \frac{2(\beta - 1)r^2}{1 - r^2} - \frac{2(\beta - 1)r}{1 - r^2} \\
		&= 1 - \frac{2(\beta - 1)r}{1 - r} \\
		&= \frac{1 - (2\beta - 1)r}{1 - r}.
	\end{align*}
	Thus, $\mathrm{Re}\left( 1 + \frac{z f''(z)}{f'(z)} \right) > 0$ provided that $1 - (2\beta - 1)r > 0$, which holds for 
	\begin{equation*}
		r < \frac{1}{2\beta - 1}.
	\end{equation*}
	Therefore, $f(z)$ is convex in the disk $|z| < R_{\mathrm{c}}(\mathcal{N}(\beta)) = {1}/{(2\beta - 1)}$.\vspace{1.2mm}
	
	To show that this radius is sharp, consider the function $f_\beta \in \mathcal{N}(\beta)$ defined by 
	\begin{equation*}
		f_{\beta}'(z) = (1 - z)^{2(\beta - 1)}.
	\end{equation*}
	A direct computation gives
	\begin{equation*}
		1 + \frac{z f_\beta''(z)}{f_\beta'(z)} = 1 - \frac{2(\beta - 1)z}{1 - z}.
	\end{equation*}
	Setting $z = r \in (0,1)$, we obtain
	\begin{equation*}
		1 + \frac{r f_\beta''(r)}{f_\beta'(r)} = 1 - \frac{2(\beta - 1)r}{1 - r} = \frac{1 - (2\beta - 1)r}{1 - r}.
	\end{equation*}
	For any $r > {1}/{(2\beta - 1)}$, we have 
	\begin{align*}
		\mathrm{Re}\left(1 + \frac{r f_\beta''(r)}{f_\beta'(r)}\right) < 0.
	\end{align*} Hence, the function $f_\beta$ is not convex in any larger disk. This completes the proof.
\end{proof}

\noindent{\bf Acknowledgment:} We would like to thank the referee(s) for their thorough review and valuable suggestions, which will greatly improved the clarity and quality of the manuscript. 
\vspace{1.2mm}

%\noindent {\bf Funding:} Not Applicable.\vspace{1.2mm}

\noindent\textbf{Compliance of Ethical Standards:}\\

\noindent\textbf{Conflict of interest:} The authors declare that there is no conflict  of interest regarding the publication of this paper.\vspace{1.2mm}

\noindent\textbf{Data availability statement:}  Data sharing not applicable to this article as no datasets were generated or analysed during the current study.\vspace{1.2mm}

\noindent {\bf Authors' contributions:} All the three  authors have equal contributions in preparation of the manuscript.

\end{document}